\documentclass[12pt, reqno]{amsart}
\usepackage{amsmath, amsthm, amscd, amsfonts, amssymb, mathtools, color, hyperref}
\usepackage{geometry}
\usepackage[dvipsnames]{xcolor}

\newtheorem{theorem}{Theorem}[section]
\newtheorem{lemma}[theorem]{Lemma}
\newtheorem{proposition}[theorem]{Proposition}
\newtheorem{corollary}[theorem]{Corollary}

\theoremstyle{definition}
\newtheorem{definition}[theorem]{Definition}
\newtheorem{example}[theorem]{Example}
\theoremstyle{remark}
\newtheorem{remark}[theorem]{Remark}
\numberwithin{equation}{section}

\title[Matrix polynomials and joint spectral radius] {On matrix polynomials and the joint spectral radius over max-algebras}

\author[Askar]{Askar Ali M$^{1}$}
\address{$^{1}$ Azim Premji University, Bhopal, India}
\email{askar.m@apu.edu.in}
\author[Sachindranath]{Sachindranath Jayaraman$^{2,\ast}$}
\address{$^{2}$ School of Mathematics, IISER Thiruvananthapuram, India}
\email{sachindranathj@iisertvm.ac.in, sachindranathj@gmail.com}
\thanks{$^\ast$Corresponding author}

\subjclass[2020]{Primary:  15A80 , 11B83, 15A18   Secondary: 37C25 }
\keywords{Max-algebras; matrix polynomial; joint spectral radius; matrix norm; periodic points; common eigenvectors}

\begin{document}

\noindent

\begin{abstract}
Our aim is to study matrix polynomials over max-algebras and their growth in terms of max-induced seminorms. 
In particular, we compare the set growth of a bounded family $\Psi$ of matrix polynomials, measured in terms of the 
seminorms $\eta_{\|\cdot\|}$ and $\hat{\eta}_{\|\cdot\|}$ with the induced joint spectral radius of the coefficient pool 
$\Psi_0$ of the matrix polynomials. Dynamics of max-linear maps and convergence to periodic points under a single 
joint spectral radius condition and the existence of common max-eigenvectors of the coefficient pool are also brought out.
\end{abstract}

\maketitle

\section{Introduction}

We work throughout over the field $\mathbb{R}$ of real numbers. By a max-algebra, we mean the triple $(\mathbb{R}_+,\oplus,\otimes)$, where $\mathbb{R}_+$ is the set of all nonnegative real numbers with the binary 
operations $a \oplus b = \max\{a,b\}$ and $a \otimes b = ab$. Max-algebras provide an idempotent linear-algebraic 
language for a broad class of nonlinear problems arising in discrete-event systems, scheduling, optimization and performance evaluation. This setting is isomorphic (through the exponential map) to the max-plus model commonly used in discrete-event 
dynamics. Excellent references on this subject are the monographs \cite{Butkovic, Economics-note, Max@work}. 

We shall denote by $M_n(\mathbb{R}_+)$ the collection of all $n \times n$ matrices with entries from the max-algebra 
described in the previous paragraph. Given any two such matrices $A$ and $B$, their matrix product (denoted by $AB$) 
is defined by $(A B)_{ij} = \displaystyle \max_{k} (a_{ik} .  b_{kj})$ (this also includes the product $Ax$, when $x$ is a 
vector). We shall also denote by $GL_n(\mathbb{R}_+)$ the set of all invertible matrices in a max algebra. It is easy to 
prove that an element of $GL_n(\mathbb{R}_+)$ is necessarily a generalized permutation matrix - one that is a product 
of a diagonal matrix and a permutation matrix.

Spectral analysis of max-algebra matrices plays a key role in several scenarios, such as discrete event dynamical systems, 
stability analysis, scheduling problems and so on. We refer the readers to \cite{Butkovic, Gursoy-Mason, Neda-Ghasemi, DEDS-dynamic-product, Lur, Lur2005, Muller-Peperko}. Beyond single matrices, many problems lead naturally to matrix polynomials over max-algebras. Given matrices $A_0,\dots,A_{m-1} \in M_n(\mathbb{R}_+)$, a max-matrix polynomial 
is an expression of the form $P(\lambda):= A_0 \oplus \lambda A_1 \oplus \cdots \oplus \lambda^{m-1}A_{m-1},\ 
\lambda\in \mathbb{R}_+$. Spectral properties for such polynomials have been investigated recently in 
\cite{Gursoy-Mason, Muller-Peperko-0} and \cite{Neda-Ghasemi}. The asymptotic growth of a product of matrices is 
measured by the joint spectral radius (JSR). Rota and Strang laid the foundation to this approach (see \cite{Rota-Strang}) 
and is now a standard reference in areas such as switching systems, wavelets and control \cite{Heil-Strang}. In \cite{Lur}, 
Lur initiated a study of max-algebraic version of the generalized or joint spectral radius theory. For a given norm 
$||.||$ in $\mathbb{R}^n$ and $A\in M_n(\mathbb{R}_+)$, a seminorm, as introduced by Lur, is defined by 
$$\eta_{\|\cdot\|}(A):= \displaystyle \sup \Bigl\{\frac{||Ax||}{||x||}: 0 \neq x \in \mathbb{R}_+ \Bigr\}.$$

For a bounded family of matrices $\Sigma$, one can then define (as done in \cite{Lur}) the joint spectral radius built 
from $\eta$ by 
$$\rho_\eta(\Sigma):= \displaystyle \limsup_{k\to\infty}\Big(\ \sup_{A_1,\dots,A_k \in \Sigma} 
\eta_{\|\cdot\|}(A_k  \dots  A_1)\ \Big)^{1/k}.$$ 
In \cite{Lur2005}, the author introduced the $\hat{\eta}$-norm for a matrix $A \in M_n (\mathbb{R}_+)$ by 
$$\hat{\eta}_{\|\cdot \|}(A) = \displaystyle \limsup_{k\to\infty} (\eta_{\|\cdot \|}(A^k))^{1/k}.$$ 
The set growth of a bounded family $\Sigma$ built from $\hat{\eta}$ is then given by  
$$\hat{\eta}_{\|\cdot \|}(\Sigma) = \limsup_{k\to\infty} (\sup_{A \in \Sigma}\ \eta_{\|\cdot \|}(A^k))^{1/k}.$$

This notion of set growth accounts for asymptotic stability of matrices in max-algebras. Motivated by these approaches, 
Muller and Peperko \cite{Muller-Peperko-0} developed a detailed study of the joint spectral radius in max-algebras. 
Later on, the authors in \cite{Neda-Ghasemi}, extended these notions to matrix polynomials 
with a view to study various spectral properties and norm inequalities in max-algebras. As Muller and Peperko point out, 
there is an effective calculation of the joint spectral radius in max-algebras (for bounded families), in terms of the maximum 
cycle geometric mean (see Proposition $2.1$ of \cite{Muller-Peperko-0}). We shall exploit this to illustrate our results 
by means of examples.

The purpose of this article is to connect the above notions and define the joint-spectral-radius of a family of 
max-matrix polynomials, in terms of the collection of coefficient matrices. In particular, given a bounded collection $\Psi$ of max-matrix polynomials (see Definition \ref{JSR-matrix-polynomials}), 
let us associate to it the coefficient pool $\Psi_0\subset M_n(\mathbb{R}_+)$ obtained by collecting all coefficient matrices appearing in elements of $\Psi$. Then, using the above defined max-induced seminorm $\eta_{\|\cdot\|}$,
we define a set growth rate for $\Psi$ under polynomial multiplication and compare
it to the max-algebraic joint spectral radius of $\Psi_0$. This provides a quantitative bridge between polynomial
algebra on the one hand and switched max-linear dynamics on the other hand.

We summarize our results now. We begin with a single max-matrix polynomial \(P\) and show that the joint spectral radius 
of the coefficient-set sits between the norms of the polynomial built from \(\hat{\eta}\) and \(\eta\) respectively (Theorem \ref{thm:etacap-rho-eta-inequality}). We then extend this idea for a bounded family \(\Psi\) of matrix polynomials, and we 
prove two-sided bounds comparing the set growth of \(\Psi\) with the joint spectral radius of the coefficient pool \(\Psi_0\), 
up to a factor depending only on the maximal degree in \(\Psi\) (Theorem \ref{thm:rho-eta-rho-inequality}). These inequalities sharpen the idea that the asymptotic growth of polynomial products is controlled by the growth of the matrices appearing as coefficients. When the coefficient pool \(\Psi_0\) is simultaneously triangularizable in the max-algebraic sense
(through a common permutation similarity), we derive an explicit formula for the joint spectral radius of \(\Psi_0\) as the maximum of the joint spectral radii of the induced one-dimensional diagonal pools (Theorem \ref{thm:SJR-triangular-family}). These results extend and complement the spectral descriptions of max-matrix polynomials in
\cite{Gursoy-Mason, Neda-Ghasemi} by linking them to the joint-growth quantities.

In Section \ref{Sec:com-eigenvector}, we examine common eigenvectors and periodic points of a bounded family of matrices 
(or correspondingly, polynomials with coefficients from this bounded family), and its connection with the joint spectral radius. Periodic points arise naturally in the dynamics of cone-preserving maps, and in particular in the iteration of a nonnegative 
matrix acting on $\mathbb{R}^n_+$. A classical consequence of nonlinear Perron--Frobenius theory (see for instance 
Theorem $B.4.7$ of \cite{Lemmens-Nussbaum}) states that if a nonnegative matrix $A$ has spectral radius at most one, 
then along a suitable arithmetic subsequence of iterates, every bounded orbit converges to a periodic point: there exists 
$q\geq 1$ such that for each $x$ with $\{A^k x\}_{k\ge 1}$ bounded,
$\lim_{k\to\infty}A^{kq}x=\xi_x$, where $\xi_x$ is a periodic point of $A$ and the period of $\xi_x$ dividing $q$. 
This viewpoint motivates studying convergence or limit sets through periodic points rather than
through pointwise convergence of $A^k x$. In applications, one is often led not to a single matrix, but to products drawn from 
a finite collection $\{A_1,\dots,A_N\}$, indexed by a word $\omega$ (finite or infinite). In the Euclidean (standard-algebra)  setting, S. Jayaraman {\it et. al.} \cite{Extractra-2022} prove that if each generator has spectral radius $1$ and the collection admits a nontrivial set of common eigenvectors $E$, then for each finite word $\omega$ and each initial condition
$x$ in the linear span $\mathcal{LC}(E)$, there exists an integer $q_\omega\ge 1$ such that
$\lim_{k\to\infty}A_\omega^{kq}x=\xi(x,\omega)$, where $\xi(x,\omega)$ is a periodic point
of the word product $A_\omega$ (with period dividing $q$). Moreover, once the word contains
all generators (in the sense that each symbol appears at least once), the integer $q_\omega$
and the limiting periodic point become independent of the particular choice of such a word. Recent work in the max-algebra 
setting extends periodic-point phenomena from a single matrix to words under generator-wise cycle-mean bounds \cite{DEDS-dynamic-product}. In the present paper we strengthen this by replacing individual spectral constraints on the generators with a single global stability condition expressed in terms of the max-algebraic joint spectral radius. Assuming that a finite set \(\Sigma\subset M_n(\mathbb{R}_+)\) has max-algebraic joint spectral radius at most one and 
admits common max-eigenvectors associated to its max-eigenvalues, we prove convergence of iterates along words containing 
each generator at least once and the limit being a periodic point (Theorem \ref{thm:JSR-common-eig-periodic}). As a 
consequence, we obtain fixed points for polynomial evaluations \(x\mapsto P(1)\otimes x\) when the coefficients of \(P\) 
belong to \(\Sigma\) (Corollary \ref{cor:com-eigenvector-polynomial}) as well as asymptotic stability (convergence to \(0\)) 
when the joint spectral radius is strictly less than one (Corollary \ref{cor:JSR-strict-to-zero}). These statements connect 
algebraic bounds of Section $3$ with the asymptotic product dynamics studied in \cite{DEDS-dynamic-product}.

\section{Preliminaries}

We shall collect some of the basic definitions and preliminary notions needed further. These may be found in 
\cite{Butkovic} and \cite{Gursoy-Mason}. We begin with the notion of a digraph associated to $A \in M_n(\mathbb{R}_+)$.

\begin{definition}\label{def:digraph}
For $A=(a_{ij})\in M_n(\mathbb{R}_+)$, the directed graph $G_A$ associated to $A$ has vertices $\{1,\dots,n\}$ with 
an edge $i \to j$ of weight $a_{ij}$ if and only if $a_{ij} > 0$. 
\end{definition}

A (simple) circuit $C$ has weight $w(C) = \prod a_{i_\ell i_{\ell+1}}$; the cycle mean and the 
maximum cycle mean of $A$ are then given by $\mu(C) = w(C)^{1/|C|}$ and 
$\mu(A) = \max\{\mu(C):C\subseteq G_A\}$  respectively. The maximum cycle mean $\mu(A)$ (also known as the 
Perron root) plays the role of the spectral radius in max-algebras. The (right) max–eigenpair satisfies 
$Ax = \lambda x$ with $\lambda \in \mathbb{R}_+$ being a max–eigenvalue. It is known that $\mu(A)$ 
is the largest max-eigenvalue of $A$. Moreover, if $A$ is irreducible, then $\mu(A)$ is the unique max-eigenvalue and 
every max-eigenvector is positive (see for instance Theorem $2$ of \cite{Bapat}). Moreover, the max version of Gelfand 
formula holds; that is, $\mu(A) = \displaystyle \lim_{m \to \infty} ||A^m||^{1/m}$ for any arbitrary matrix norm $||.||$ 
on $M_n(\mathbb{R})$. Before proceeding further, we wish to point out that certain functional inequalities involving the maximum cycle mean were established by Elsner and Hershkowitz \cite{Elsner-Hershkowitz}.

Let us now briefly point out the Frobenius normal form of a matrix. We will have an occasion to use it in a later section.

\begin{definition}\label{Frobenius-normal-form}
A matrix $A\in M_n(\mathbb{R}_+)$ is in \emph{Frobenius normal form} (FNF) if there exists $P \in GL_n(\mathbb{R}_+)$ 
such that
	\[
	P^{-1}AP=
	\begin{bmatrix}
		A_{11} & A_{12} & \cdots & A_{1t}\\
		0      & A_{22} & \cdots & A_{2t}\\
		\vdots & \ddots & \ddots & \vdots\\
		0      & \cdots & 0      & A_{tt}
	\end{bmatrix},
	\]
where each diagonal block $A_{ii}$ is irreducible. The blocks correspond to the strongly connected components of $G_A$. 
When every block is $1\times 1$, this reduces to an upper triangular form.
\end{definition}

Any reducible matrix can be written as a block upper triangular matrix via a permutation matrix, as above 
\cite{Bapat-Raghavan}. The Frobenius normal form is a max-algebraic analogue of block triangularization in classical 
nonnegative matrix theory. This decomposition plays a fundamental role in the spectral theory of max-algebraic matrices, 
as the Perron roots and eigenvectors can be analyzed blockwise through the Frobenius structure (see for instance 
Theorem $4.8$ of \cite{Katz-Schneider-Sergeev}). 

\begin{definition}\label{def:triangular}
 $A \in M_n(\mathbb{R}^n_+)$ is said to be triangularizable if  $P^{-1} A P$ is upper triangular for some 
$P \in GL_n(\mathbb{R}_+)$ (equivalently, for some permutation matrix $P$).  A family $\{A_1, \dots, A_n\}$ is said 
to be simultaneously triangularizable if there exists a $P \in GL_n(\mathbb{R}_+)$ such that $P^{-1}A_iP$ (once again, 
for some permutation matrix $P$) is upper triangular for all $i = 1, \dots, n$.
\end{definition}

In a recent work \cite{Askar-Jayaraman-Himadri}, we have obtained several interesting results on simultaneous 
triangularization of a family in the max-algebraic setting, some of which we shall make use of in a later section. The reader 
can also refer to \cite{Butkovic-Schneider-Sergeev}, for a detailed study of weak stability of matrices in max-algebras, 
which is connected with the structure of the Frobenius normal form. These are closely related to the spectral and structural properties of max-algebraic matrices.

\begin{definition}
For a fixed $m\in\mathbb{N}$, a max–matrix polynomial of degree $m-1$ is an expression of the form 
$P(\lambda) = A_0 \oplus \lambda A_1 \oplus \dots \oplus \lambda^{m-1}A_{m-1}, \ A_j\in M_n(\mathbb{R}_+), \  \lambda\in\mathbb{R}_+$.
\end{definition}

The (right) max–spectrum $\sigma_m[P]$ of $P(\lambda)$ consists of $k \in \mathbb{R}_+$ for which 
$P(k) v = k^m v$ with $v \neq 0$. As in the classical set up, one can associate a block companion matrix $C_P$ 
of order $mn$ to $P(\lambda)$ as follows: 
$C_P=\begin{bmatrix}
0&I&\cdots&0&0\\
&\ddots&\ddots&\vdots&\vdots\\
&&\ddots&I&0\\
A_0&A_1&\cdots&A_{m-2}&A_{m-1}
\end{bmatrix}$. 

One can then easily verify that $\sigma_m[P] = \sigma_m(C_P)$ (see for instance \cite{Neda-Ghasemi}). The following 
remark is worth pointing out and we shall have an occasion to make use of it later on.

\begin{remark}\label{rem:irreducible-Cp}
For a matrix polynomial $P(\lambda)$, assume that each of the coefficient matrices $A_j$ are irreducible. Then, the 
corresponding block companion matrix $C_P$ is also irreducible. 
\end{remark}

The following definitions and discussions are from \cite{Neda-Ghasemi}. We have stated these here for the sake of 
completeness. 

\smallskip

A vector norm $||.||$ on $\mathbb{R}^n$ is monotone if $0 \leq x \leq y \Rightarrow \|x\| \leq \|y\|$. 
Given a monotone norm $\|\cdot\|$ and $A\in M_n(\mathbb{R}_+)$, one can define a seminorm (see for instance 
\cite{Lur}) on $M_n(\mathbb{R}_+)$ by 
$$\eta_{||.||}(A):= \displaystyle \sup \biggl\{\frac{||Ax||}{||x||}: 0 \neq x \in \mathbb{R}^n_+ \biggr\}$$ 

This seminorm is subadditive with respect to $\oplus$ and submultiplicative with respect to $\otimes$; that is, 
$\eta_{\|\cdot\|}(X\oplus Y) \leq \eta_{\|\cdot\|}(X) + \eta_{\|\cdot\|}(Y), \ 
\eta_{\|\cdot\|}(XY) \leq \eta_{\|\cdot\|}(X)\eta_{\|\cdot\|}(Y)$.

\begin{definition}\label{Eta-norm of a polynomial}
For a polynomial \(P(\lambda ) = \displaystyle \sum_{j=0}^{m-1}\lambda^j A_j\), 
\[
\eta_{\|\cdot\|}[P]:= \displaystyle \max_{0\leq j\leq m-1}\eta_{\|\cdot\|}(A_j).
\]
\end{definition}

For a polynomial $P(\lambda) = A_0 \oplus \lambda A_1 \oplus \cdots \oplus \lambda^{m-1}A_{m-1}$, we denote by
$\Sigma_P = \{A_0, A_1, \cdots, A_{m-1}\}$, the collection of all coefficient matrices.  With the max-algebra operations, there 
is an obvious way to define the sum and product of two max-matrix polynomials. It is clear that the set $V$ of all max-matrix 
polynomials is a real vector space under $\oplus$. Given any norm $||.||$ on $\mathbb{R}^n$ and any $P(\lambda) = 
\displaystyle \sum_{j=0}^{m-1} \lambda^{j} A_j \in V$, one can define 
$$||P(\lambda)||:= \max\{||A_0||, \cdots, ||A_{m-1}||\},$$ where $||A_j||$ is the induced matrix norm from the 
vector norm $||.||$ on $\mathbb{R}^n$. It is easy to verify that $V$ is a real normed linear space with respect to the 
above norm and that a subset $\Psi \subset V$ is bounded if and only if there exists an $M > 0$ such that 
$||P(\lambda)|| \leq M$ for all $P(\lambda) \in \Psi$. It also turns out as a consequence of Proposition $2.7$ of 
\cite{Neda-Ghasemi} that $\Psi$ is bounded if and only if there is an $M > 0$ such that $\eta_{\|\cdot\|}[P] \leq M$ 
for all $P(\lambda) \in \Psi$. Before proceeding further, let us mention that we shall denote by $\Psi^{k}$, the set of 
all $k$-fold products of $P(\lambda)$ from the set $\Psi$. Similarly, we shall denote by $\Sigma_P^{k}$, the $k$-fold product 
of coefficient matrices coming from the set $\Sigma_P$, corresponding to a matrix polynomial $P(\lambda)$.

\begin{definition}\label{JSR-matrix-polynomials}
Let \(\Psi\) be a bounded set of max–matrix polynomials.
Define the set growth of \(\Psi\) by
\begin{equation}\label{Eta-norm-Psi}
\eta_{\|\cdot\|}(\Psi):=\displaystyle \limsup_{k\to\infty}\Big(\ \sup_{P\in\Psi^{k}} \eta_{\|\cdot\|}[P]\ \Big)^{1/k}.
\end{equation}

If \(\Psi_0:= \displaystyle \bigcup_{P\in\Psi} \Sigma_P\) is the coefficient pool, the  joint spectral radius built from 
\(\eta\) is defined by 
\begin{equation}\label{Rho-eta-Psi_0}
\rho_\eta(\Psi_0):= \displaystyle \limsup_{k\to\infty}\Big(\sup_{A_1,\dots,A_k\in\Psi_0} \eta_{\|\cdot\|}(A_1 \cdots  A_k)\Big)^{1/k}.
\end{equation}

\end{definition}

The above definition of $\rho_\eta(.)$ is the same as the definition of the joint spectral radius of a collection of matrices, 
except we have used the $\eta_{||.||}$.

\begin{definition}\label{def:eta-hat-norm}
For a matrix $A \in M_n (\mathbb{R}_+)$, the $\hat{\eta}$-norm is defined as 
\begin{equation}\label{Eta-hat-norm-matrix}
\hat{\eta}_{\|\cdot \|}(A):= \displaystyle \limsup_{k\to\infty} (\eta_{\|\cdot \|}(A^k))^{1/k}.
\end{equation}

For a polynomial $P(\lambda) = A_0 \oplus \lambda A_1 \oplus \cdots \oplus \lambda^{m-1}A_{m-1}$, 
\begin{equation}\label{Eta-hat-norm-polynomial}
\hat{\eta}_{\|\cdot \|}(P) = \max_j\ \hat{\eta}_{\|\cdot \|}(A_j). 
\end{equation}
\end{definition}

Finally, the set growth of a bounded $\Psi$ built from \(\hat{\eta}\) is 
\begin{equation}\label{Set-growth-from-eta-hat}
\hat{\eta}_{\|\cdot \|}(\Psi) = \limsup_{k\to\infty} (\sup_{P \in \Psi}\ \eta_{\|\cdot \|}(P^k))^{1/k}.
\end{equation}

Our objective is to compare the set growth of a bounded family $\Psi$ of matrix polynomials, measured in terms of 
the seminorms $\eta_{\|\cdot\|}$ and $\hat{\eta}_{\|\cdot\|}$ as defined in Equations \eqref{Eta-norm-Psi} and  \eqref{Set-growth-from-eta-hat} respectively, with the joint spectral radius of the coefficient pool $\Psi_0$. Since the set 
growth is defined in terms of $\eta_{\|\cdot\|}$, it is natural to define the joint spectral radius of $\Psi_0$ via the same 
seminorm, as in Equation \eqref{Rho-eta-Psi_0}, so that the two quantities are directly comparable. We note that this choice 
of norm does not affect the value of the joint spectral radius, since all monotone norms on $\mathbb{R}^n$ are equivalent 
in finite dimensions.

The following discussion is worth pointing out. Given a bounded subset $\Sigma$ of $M_n(\mathbb{R}_+)$, 
the max-algebraic version of the generalized spectral radius $\mu(\Sigma$) of $\Sigma$, is defined as 
$$\mu(\Sigma) = \displaystyle \limsup_{m \to \infty} \big[\sup_{A \in \Sigma^{m}} \mu(A) \big]^{1/m},$$ and is equal 
to the joint spectral radius of $\Sigma$, 
$$\rho(\Sigma) = \displaystyle \limsup_{m \to \infty} \big[\sup_{A \in \Sigma^{m}} ||A|| \big]^{1/m}.$$ We state this as 
a theorem below and will have an occasion to use it later on.

\begin{theorem}\label{Determining the JSR}
Let $\Sigma \subset M_n(\mathbb{R}_+)$ be a bounded set. Then, the following hold.
\begin{enumerate}
\item (Proposition $2.1$, \cite{Muller-Peperko-0}) $\mu(\Sigma) = \mu(S(\Sigma))$, where $S(\Sigma) = 
\displaystyle \bigoplus_{A \in \Sigma} A$.
\item (Corollary $2.3$, \cite{Muller-Peperko-0}) $\mu(\Sigma) = \rho(\Sigma)$.
\end{enumerate}
\end{theorem}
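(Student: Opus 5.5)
We prove the two parts together by squeezing $\rho(\Sigma)$ and $\mu(\Sigma)$ between $\mu(S(\Sigma))$ from above and below. Write $S=S(\Sigma)=\bigoplus_{A\in\Sigma}A$; since $\Sigma$ is bounded, $S$ is a well-defined element of $M_n(\mathbb{R}_+)$ with entries $s_{ij}=\sup_{A\in\Sigma}a_{ij}$. We fix a monotone norm; all norms are equivalent, so $\rho(\Sigma)$ does not depend on this choice.

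\emph{Step 1: $\mu(\Sigma)\le\rho(\Sigma)$.} For any matrix $B$, an eigenpair $Bx=\mu(B)x$ with $x\ge 0$, $x\neq 0$ (the Perron root is a max-eigenvalue) gives $\mu(B)\le\|B\|$ for the induced norm. Applying this to each $B\in\Sigma^m$ yields $\sup_{\Sigma^m}\mu\le\sup_{\Sigma^m}\|\cdot\|$. Taking $m$-th roots and $\limsup$ gives the inequality.

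\emph{Step 2: $\rho(\Sigma)\le\mu(S)$.} Every product $A_m\cdots A_1$ with $A_i\in\Sigma$ is entrywise dominated by $S^m$. This holds because max-products are monotone in each factor: $(A_m\cdots A_1)_{ij}=\max_{\text{paths}}\prod a^{(\cdot)}_{\cdot\cdot}\le (S^m)_{ij}$. With a monotone norm this gives $\sup_{\Sigma^m}\|B\|\le\|S^m\|$. The max Gelfand formula $\mu(S)=\lim\|S^m\|^{1/m}$ then gives $\rho(\Sigma)\le\mu(S)$.

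\emph{Step 3: $\mu(S)\le\mu(\Sigma)$.} This is the main obstacle. Take a critical simple circuit $C=(i_1\to i_2\to\cdots\to i_\ell\to i_1)$ of $G_S$, so that $\mu(S)=\bigl(\prod_r s_{i_r i_{r+1}}\bigr)^{1/\ell}$ with $\ell\le n$. Fix $\varepsilon>0$ and, for each edge, choose $A^{(r)}\in\Sigma$ with $a^{(r)}_{i_r i_{r+1}}\ge s_{i_r i_{r+1}}-\varepsilon$. We then form the product $B=A^{(1)}A^{(2)}\cdots A^{(\ell)}\in\Sigma^\ell$, ordered so that the path $i_1\to\cdots\to i_\ell\to i_1$ is realized. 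Its diagonal entry satisfies $b_{i_1i_1}\ge\prod_r a^{(r)}_{i_ri_{r+1}}$, and since a loop at a vertex is a circuit, $\mu(B)\ge b_{i_1i_1}$.

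To pass to the $\limsup$ over $m$, we use powers. We have $B^k\in\Sigma^{k\ell}$ and $\mu(B^k)=\mu(B)^k$, the latter because cycle means of powers satisfy $\mu(B^k)=\mu(B)^k$, which follows, e.g., from Gelfand. Hence $\bigl(\sup_{\Sigma^{k\ell}}\mu\bigr)^{1/(k\ell)}\ge b_{i_1i_1}^{1/\ell}$ for all $k$, so $\mu(\Sigma)\ge\bigl(\prod_r(s_{i_ri_{r+1}}-\varepsilon)\bigr)^{1/\ell}$. Letting $\varepsilon\to0$ gives $\mu(\Sigma)\ge\mu(S)$. (If $\mu(S)=0$ there is nothing to prove.) The delicate points here are checking the index order in the product, and noting that the $\limsup$ along the subsequence $m=k\ell$ bounds the full $\limsup$ from below.

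Combining, $\mu(S)\le\mu(\Sigma)\le\rho(\Sigma)\le\mu(S)$, which proves both (1) and (2) simultaneously.
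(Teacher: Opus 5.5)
Your proof is correct and complete. The paper gives no proof of this statement at all: both parts are quoted from Muller--Peperko (Proposition~2.1 and Corollary~2.3 there), so the comparison is between a citation and your self-contained argument.

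Your cyclic chain $\mu(S)\le\mu(\Sigma)\le\rho(\Sigma)\le\mu(S)$ gives (1) and (2) at once. It uses only three ingredients:
\begin{enumerate}
\item the entrywise domination $A_m\cdots A_1\le S^{m}$;
\item the max Gelfand formula, which the paper already states in its Preliminaries;
\item the fact that a critical simple circuit of $S$, of length $\ell\le n$, is realized up to $\varepsilon$ on the diagonal of a single $\ell$-fold product from $\Sigma$.
\end{enumerate}

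Beyond the citation, your version has one concrete advantage. Steps~1 and~2 go through verbatim with the seminorm $\eta_{\|\cdot\|}$ in place of a matrix norm. Indeed, $\eta_{\|\cdot\|}$ is monotone in the entrywise order, satisfies $\mu(B)\le\eta_{\|\cdot\|}(B)$ by evaluating on a max-eigenvector, and lies within constant factors of $\max_{ij}b_{ij}$. So your argument directly justifies $\rho_\eta(\Sigma)=\mu(S(\Sigma))$. That is the form the paper actually uses in its two examples, whereas the cited statement is phrased for $\rho$ defined with an ordinary norm.

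Three minor points:
\begin{enumerate}
\item In Step~1 it is enough to note that $\mu(B)$ is a geometric mean of entries of $B$, so $\mu(B)\le\max_{ij}b_{ij}$.
\item In Step~3 you do not need $\mu(B^k)=\mu(B)^k$, since $\mu(B^k)\ge (B^k)_{i_1i_1}\ge b_{i_1i_1}^{k}$ is immediate.
\item Choose $\varepsilon$ smaller than the least edge weight on $C$, so every factor $s_{i_ri_{r+1}}-\varepsilon$ is positive before you multiply the inequalities.
\end{enumerate}
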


Statement $2$ in the above theorem is known as the Berger-Wang formula. Before proceeding with the main results, 
we bring out to the attention of readers, the following, although we do not make explicit use of these in the present work. 
Peperko \cite{Peperko} obtained inequalities concerning the generalized spectral radius \ \& \ its max-version and also 
gave a description of the maximum cycle mean of a max-algebraic matrix (see Definition $2.3$ and Example $2.5$ of \cite{Peperko}). In a much recent work, Bogdanovic and Peperko \cite{Bogdanovic-Peperko} define as well as establish 
several monotonicity properties of the generalized spectral radius and the joint spectral radius in a more general setting. 
We also refer the reader to \cite{ Thaghizadeh et al - 1, Thaghizadeh et al - 2}, where the max numerical range $W_{\max}(\Sigma)$ of a bounded set $\Sigma$ of nonnegative matrices is studied in connection with the generalized 
spectral radius.

\section{Main Results}

The main results are presented in this section. We subdivide this section into three subsections for ease of reading. We begin 
with a subsection, where we prove some norm inequalities connected to the joint spectral radius.

\subsection{Joint spectral radius and norm inequalities}\label{Sec:JSR-norm-inequalities}\hspace*{0.5cm}

We bring out norm-inequalities for a polynomial and a bounded collection of polynomials that connects 
the joint spectral radius and the set growth built from $\eta$ and $\hat{\eta}$ in this section. Our first result is the following.

\begin{theorem}\label{thm:etacap-rho-eta-inequality}
Let $P(\lambda) = A_0 \oplus \lambda A_1 \oplus \cdots \oplus \lambda^{m-1}A_{m-1}$ be a matrix polynomial. 
Then, $\hat{\eta}_{\| \cdot \|}(P) \leq \rho_\eta(\Sigma_P) \leq \eta_{\| \cdot \|} (P)$.
\end{theorem}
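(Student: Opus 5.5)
Both inequalities follow from the definitions once we write $\Sigma_P=\{A_0,\dots,A_{m-1}\}$, which is finite, and use submultiplicativity of $\eta_{\|\cdot\|}$ with respect to $\otimes$. We treat the lower and upper bounds separately.

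\emph{Lower bound.} Fix $j\in\{0,\dots,m-1\}$. For every $k$, the power $A_j^k=A_j\cdots A_j$ is one particular product of $k$ matrices from $\Sigma_P$. Hence
\[
\eta_{\|\cdot\|}(A_j^k)\le \sup_{B_1,\dots,B_k\in\Sigma_P}\eta_{\|\cdot\|}(B_1\cdots B_k).
\]
Taking $k$-th roots and then $\limsup_{k\to\infty}$ on both sides (the map $t\mapsto t^{1/k}$ is monotone on $\mathbb{R}_+$) gives $\hat{\eta}_{\|\cdot\|}(A_j)\le\rho_\eta(\Sigma_P)$. Maximizing over $j$ and using \eqref{Eta-hat-norm-polynomial} yields $\hat{\eta}_{\|\cdot\|}(P)\le\rho_\eta(\Sigma_P)$.

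\emph{Upper bound.} For any $B_1,\dots,B_k\in\Sigma_P$, iterating the submultiplicativity $\eta_{\|\cdot\|}(XY)\le\eta_{\|\cdot\|}(X)\eta_{\|\cdot\|}(Y)$ gives
\[
\eta_{\|\cdot\|}(B_1\cdots B_k)\le\prod_{i=1}^k\eta_{\|\cdot\|}(B_i)\le\Big(\max_j\eta_{\|\cdot\|}(A_j)\Big)^k=\eta_{\|\cdot\|}[P]^k,
\]
using Definition \ref{Eta-norm of a polynomial}. Taking the supremum over all such products, then $k$-th roots and the $\limsup$, gives $\rho_\eta(\Sigma_P)\le\eta_{\|\cdot\|}[P]$. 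Here we read the right-hand side $\eta_{\|\cdot\|}(P)$ in the statement as $\eta_{\|\cdot\|}[P]$.

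\emph{Points to check.} There is no real obstacle, but a few details need care.
\begin{itemize}
\item Submultiplicativity must hold for the max-product. It follows from $\|XYx\|\le\eta(X)\|Yx\|$ when $Yx\ne0$, and is trivial when $Yx=0$.
\item Since $\Sigma_P$ is finite, all the suprema above are finite.
\item If some $\eta$ values are $0$, the inequalities still hold, interpreting $0^{1/k}=0$.
\end{itemize}
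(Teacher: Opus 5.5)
Your proof is correct and follows the paper's argument. The lower bound comes from specializing the supremum in $\rho_\eta(\Sigma_P)$ to the powers $A_j^k$, and the upper bound from iterating submultiplicativity to get $\eta_{\|\cdot\|}(B_1\cdots B_k)\le \eta_{\|\cdot\|}[P]^k$. Your write-up is slightly cleaner than the paper's, which drops the exponent $k$ in the intermediate bound and writes $\eta_{\|\cdot\|}(P)$ where $\eta_{\|\cdot\|}[P]$ is meant.
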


\begin{proof}
By Lemma $1 (V)$ of \cite{Lur}, we have, $\eta_{\| \cdot \|}  (A B) \leq \eta_{\| \cdot \|} (A) \cdot \eta_{\| \cdot \|} (B)$. 
Then,
\begin{equation*}
\eta_{\| \cdot \|} (A_{i_1} A_{i_2} \cdots A_{i_k}) \leq 
\displaystyle \prod_{j=1}^{k} \eta_{\| \cdot \|} (A_{i_j}) \leq ( \max_j \eta_{\| \cdot \|} (A_{i_j})) = 
(\eta_{\| \cdot \|} (P))^k .
\end{equation*}
By taking the $k^{th}$ root and then the supremum over $\Sigma_P^{k}$, 
we get 
\[\rho_\eta(\Sigma_P) \leq \eta_{\| \cdot \|} (P).\] 
To prove the lower bound, observe that for each $j$,
\begin{equation*}
\begin{aligned}
    \rho_\eta (\Sigma_P) & = \displaystyle \limsup_{k \to \infty} (\sup_{\Sigma_P} 
    ( \eta_{\| \cdot \|} (A_{i_1}  A_{i_2} \cdots A_{i_k})))^{1/k}\\
    &\geq \limsup_{k \to \infty} ( \eta_{\| \cdot \|} (A_{i_j}^k )^{1/k}\\
    & = \hat{\eta}_{\| \cdot \|}(A_{i_j}).
\end{aligned}
\end{equation*}
\end{proof}

We now move on to bounded families of matrix polynomials.

\begin{theorem}\label{thm:rho-eta-rho-inequality}
Let $\Psi$ be a bounded family of matrix polynomials. Then, 
    \[\rho_{\eta} (\Psi_0) \leq \eta_{\| \cdot \|} (\Psi) \leq m \rho_{\eta} (\Psi_0),\] 
where $m = \displaystyle \max_{P \in \Psi} \deg(P)$.
\end{theorem}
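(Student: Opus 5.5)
The plan is to expand a $k$-fold product of polynomials from $\Psi$ coefficient by coefficient and compare each coefficient with $k$-fold products of matrices from the pool $\Psi_0$. Take $P^{(1)},\dots,P^{(k)}\in\Psi$ and write $P^{(l)}(\lambda)=\bigoplus_{j}\lambda^{j}A^{(l)}_j$. In the max-algebra, the coefficient of $\lambda^{r}$ in $P^{(1)}\cdots P^{(k)}$ is
\[
C_r=\bigoplus_{i_1+\cdots+i_k=r} A^{(1)}_{i_1}A^{(2)}_{i_2}\cdots A^{(k)}_{i_k},
\]
and every term in this max is a $k$-fold product of elements of $\Psi_0$.

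Two elementary properties of $\eta_{\|\cdot\|}$, both coming from the monotonicity of the norm, will be used.
\begin{enumerate}
\item \emph{Monotonicity.} If $0\le X\le Y$ entrywise, then $Xx\le Yx$ for every $x\in\mathbb{R}^n_+$. Hence $\|Xx\|\le\|Yx\|$, and so $\eta_{\|\cdot\|}(X)\le\eta_{\|\cdot\|}(Y)$.
\item \emph{Subadditivity.} For a max of $N$ matrices, $(\bigoplus_i X_i)x\le\sum_i X_ix$. This gives $\eta_{\|\cdot\|}(\bigoplus_{i=1}^N X_i)\le\sum_i\eta_{\|\cdot\|}(X_i)\le N\max_i\eta_{\|\cdot\|}(X_i)$, which is the subadditivity already recorded in the Preliminaries.
\end{enumerate}

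\emph{Lower bound.} Fix $A_1,\dots,A_k\in\Psi_0$. For each $l$, choose $P^{(l)}\in\Psi$ having $A_l$ as its coefficient of some power $\lambda^{d_l}$. Then the term with $(i_1,\dots,i_k)=(d_1,\dots,d_k)$ appears in $C_{d_1+\cdots+d_k}$, so $C_{d_1+\cdots+d_k}\ge A_1\cdots A_k$ entrywise. By monotonicity,
\[
\eta_{\|\cdot\|}(A_1\cdots A_k)\le \eta_{\|\cdot\|}[P^{(1)}\cdots P^{(k)}]\le \sup_{P\in\Psi^k}\eta_{\|\cdot\|}[P].
\]
Take the supremum over $A_1,\dots,A_k$, then $k$-th roots and $\limsup$, to get $\rho_\eta(\Psi_0)\le\eta_{\|\cdot\|}(\Psi)$. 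The only care needed is to keep the same multiplication order as in \eqref{Rho-eta-Psi_0}; this is harmless because the supremum runs over all ordered $k$-tuples.

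\emph{Upper bound.} Each index $i_l$ ranges over at most $m$ coefficient slots, namely $0,\dots,m-1$ in the paper's convention that a polynomial of degree $m-1$ has coefficients $A_0,\dots,A_{m-1}$. So the max defining each $C_r$ has at most $m^k$ terms. Subadditivity then gives
\[
\eta_{\|\cdot\|}(C_r)\le m^k\sup_{B_1,\dots,B_k\in\Psi_0}\eta_{\|\cdot\|}(B_1\cdots B_k).
\]
Maximizing over $r$ and over the choice of $P^{(1)},\dots,P^{(k)}$, taking $k$-th roots and $\limsup$, the factor $(m^k)^{1/k}=m$ comes out and we obtain $\eta_{\|\cdot\|}(\Psi)\le m\,\rho_\eta(\Psi_0)$. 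Boundedness of $\Psi$, and hence of $\Psi_0$, makes all suprema finite.

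\emph{Main obstacle.} The step needing the most care is the upper bound. First, one must justify that $\eta_{\|\cdot\|}$ of a max of matrices is controlled by the sum of their $\eta$-values; this follows from monotonicity of the norm, since $\bigoplus$ is dominated entrywise by the ordinary sum. Second, the count of index tuples has to match the constant in the statement. If "$\deg(P)$" were read as the top power $m$ rather than $m-1$, the same argument would give the factor $m+1$. So I would state explicitly that $m$ counts the coefficient slots, consistent with the definition of a max–matrix polynomial of degree $m-1$. A sharper count, the number of compositions of $r$, is unnecessary, because any bound that is exponential in $k$ with base $m$ suffices.
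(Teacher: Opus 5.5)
Your proposal is correct and follows essentially the same route as the paper. Both expand the $k$-fold product coefficientwise as $\bigoplus_s \lambda^s C_s$, obtain the lower bound from $C_s \ge A_1\cdots A_k$ for a suitable $s$ together with monotonicity of $\eta_{\|\cdot\|}$, and obtain the upper bound from subadditivity of $\eta_{\|\cdot\|}$ over the at most $m^k$ terms of each $C_s$; your caveat about the degree convention is also apt, since the paper's proof likewise lets the indices run over $0\le i_j\le m-1$ and so tacitly takes $m$ to be the number of coefficient slots rather than the top power.
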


\begin{proof}
Let $P_i(\lambda)= A_{0}^{(i)} \oplus \lambda A_{1}^{(i)} \oplus \cdots \oplus \lambda^{m_i-1}A_{{{m_i-1}}}^{(i)}$. 
It is easy to verify that 
\[P_1(\lambda)  P_2(\lambda) \cdots P_k (\lambda) = \displaystyle 
\bigoplus_{ s=0}^{m_1 +\cdots +m_k} \lambda^s C_s,\] where 
$C_s = \displaystyle \bigoplus_{i_1+ \cdots + i_k=s, 0\leq i_j \leq m-1} 
(A_{i_1}^{(1)} A_{i_2}^{(2)} \cdots A_{i_k}^{(k)})$. Then, one of the coefficients $C_s$ contains 
the term $A_1 A_2 \cdots A_k$. For this coefficient, 
 \begin{equation*}
    \begin{aligned}
    C_s &\geq A_1 A_2 \cdots A_k.\\
    \text{Thus, } \eta_{\|.\|}(C_s) &\geq \eta_{\|.\|}(A_1 A_2 \cdots A_k).
    \end{aligned}
\end{equation*}
We also have, 
\[\eta_{\|.\|} (P_1 P_2 \cdots P_k) = \max_s \eta_{\|.\|}(C_s) \geq \eta_{\|.\|} (A_1 A_2 \cdots A_k).\]
By taking the supremum over $P_i \in \Psi$, and $A_i \in \Psi_0$, the above inequality becomes,
\[\sup_{P \in \Psi^{k}} \eta_{\|.\|}(P) \geq 
\sup_{\Psi_0^{k}} (A_1 A_2 \cdots A_k).\]
By taking the $k^{th}$ root and then taking the $\limsup$, we get 
\[\eta_{\|.\|}(\Psi) \geq \rho_\eta (\Psi_0).\] 
To get the upper bound, by sub-additivity of the $\eta-$semi norm, we have 
\begin{equation*}
    \begin{aligned}
    \eta_{\|.\|}(C_s) 
    & \leq \displaystyle \bigoplus_{i_1+ \cdots + i_k=s, 0\leq i_j \leq m-1} \eta_{\|.\|}  
    (A_{i_1}^{(1)} A_{i_2}^{(2)} \cdots A_{i_k}^{(k)})\\
    &\leq |\{ i_1+ \cdots + i_k=s|\ 0\leq i_j \leq m-1\}| \sup \eta_{\|.\|} 
    (A_{i_1}^{(1)} A_{i_2}^{(2)} \cdots A_{i_k}^{(k)})\\
    &\leq m^k \sup \eta_{\|.\|}  (A_{i_1}^{(1)} A_{i_2}^{(2)} \cdots A_{i_k}^{(k)}).
    \end{aligned}
\end{equation*}
From the above, we have 
$\eta_{\|.\|} (P_1 \otimes P_2 \cdots P_k) \leq m^k \sup \eta_{\|.\|}(A_{i_1}^{(1)} A_{i_2}^{(2)} \cdots A_{i_k}^{(k)})$. 
By taking the $k^{th}$ root, the supremum over $\Psi^{k}$, and then 
the $\limsup$, we get 
\[\eta_{\|.\|}(\Psi) \leq m\ \rho_\eta (\Psi_0).\]
We thus have $\rho_{\eta} (\Psi_0) \leq \eta_{\| \cdot \|} (\Psi) \leq m \rho_{\eta} (\Psi_0)$.
\end{proof}

\subsection{Triangularizable coefficient matrices}\hspace*{0.5cm}

We now consider the case when the coefficients of the matrix polynomial are simultaneously triangularizable. 

\begin{definition}\label{def:perm-invariant-norm}
A norm $\| \cdot \|$ on $\mathbb{R}_+^n$ is called permutation invariant if $\| Px \| = \| x \|$, for any 
$x \in \mathbb{R}_+^n$, and for any permutation matrix $P$.
\end{definition}

\begin{lemma}\label{lem:perm-norm}
If $\| \cdot \|$ is a permutation invariant norm, then the $\eta_{\| \cdot \|}$ induced from $\| \cdot \|$ satisfies the 
following property. For any $A \in M_n(\mathbb{R}_+)$ and any permutation matrix $P$, 
$\eta_{\| \cdot \|}(P^{-1}AP) = \eta_{\| \cdot \|}(A)$.
\end{lemma}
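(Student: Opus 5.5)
The proof will be a direct change of variables in the supremum that defines $\eta_{\|\cdot\|}$. The key observation is that a permutation matrix $P$ is a bijection of $\mathbb{R}^n_+$ onto itself that maps nonzero vectors to nonzero vectors. It is also an isometry for $\|\cdot\|$ by Definition \ref{def:perm-invariant-norm}. Its inverse $P^{-1}$ (the transpose) is again a permutation matrix, so it is an isometry as well. Moreover, the max-algebraic product with a permutation matrix simply permutes coordinates. Hence $P^{-1}AP$ acting on $x$ is the same in the max-algebra as in ordinary arithmetic, and associativity of the max-product lets us write $(P^{-1}AP)x = P^{-1}(A(Px))$.

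First I will fix $0\neq x\in\mathbb{R}^n_+$ and set $y = Px$. Then $y\in\mathbb{R}^n_+$, $y\neq 0$, and $\|y\| = \|x\|$. Next I will compute
\[
\frac{\|P^{-1}APx\|}{\|x\|} = \frac{\|P^{-1}(Ay)\|}{\|y\|} = \frac{\|Ay\|}{\|y\|},
\]
using permutation invariance applied to $P^{-1}$ and to the vector $Ay\in\mathbb{R}^n_+$. Taking the supremum over $x$ will give $\eta_{\|\cdot\|}(P^{-1}AP)\le \eta_{\|\cdot\|}(A)$. For the reverse inequality, I will note that as $x$ ranges over all nonzero vectors of $\mathbb{R}^n_+$, so does $y=Px$, because $P$ is bijective on $\mathbb{R}^n_+\setminus\{0\}$ with inverse $P^{-1}$. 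So the two suprema run over the same set of quotients and are equal. Alternatively, I can apply the first inequality to $P^{-1}AP$ with the permutation $P^{-1}$ in place of $P$.

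The only point needing care is that the max-algebra product $P^{-1}(A(Px))$ genuinely equals $(P^{-1}AP)x$. This holds by associativity of the max-times matrix product, which is a standard fact for the semiring $(\mathbb{R}_+,\max,\cdot)$. A second point is that $P^{-1}$ exists in $GL_n(\mathbb{R}_+)$ and is the permutation matrix $P^{T}$. Beyond these checks, I expect no real obstacle; the argument is a routine substitution.
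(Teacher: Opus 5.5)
Your proposal is correct and follows essentially the same route as the paper: substitute $y = Px$ and use permutation invariance of the norm to replace $\|P^{-1}Ay\|/\|P^{-1}y\|$ by $\|Ay\|/\|y\|$. Your remarks on the bijectivity of $P$ on $\mathbb{R}^n_+\setminus\{0\}$ and on associativity of the max-times product make explicit the steps the paper's proof leaves implicit.
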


\begin{proof}
Observe that $\eta_{\| \cdot \|}(P^{-1}AP) = \sup_{x \neq 0} \frac{\|P^{-1}APx\|}{\|x\|}$. For $y=Px$, we have the following.
\begin{equation*}
    \begin{aligned}
    \eta_{\| \cdot \|}(P^{-1}AP) 
    & = \sup_{x \neq 0} = \frac{\|P^{-1}Ay\|}{\|P^{-1}y\|}\\
    & = \sup_{x \neq 0} = \frac{\|Ay\|}{\|y\|}\\
    & = \eta_{\|\cdot\|} (A).
    \end{aligned}
    \end{equation*}
\end{proof}

Let us now consider a matrix polynomial where the coefficient matrices are simultaneously triangularizable. 

\begin{proposition}\label{prop:triangular-spectrum}
Let $P(\lambda) = A_0 \oplus \lambda A_1 \oplus \cdots \oplus \lambda^{m-1}A_{m-1}$ be a matrix polynomial, with 
$\Sigma_P = \{A_0,A_1, \cdots, A_{m-1}\}$ being a simultaneously triangularizable family through a permutation matrix $S$.  
Taking $A_i' = S^{-1}AS$ to be upper triangular, define the scalar polynomials 
    \[p_i(\lambda) = (A'_0)_{ii} \oplus \lambda (A'_1)_{ii}\oplus \cdots \oplus \lambda^{m-1}(A'_{m-1})_{ii}. \] 
Then, $\sigma_m[P] = \displaystyle \bigcup_{i=1}^{n} \sigma_m[p_i]$.
\end{proposition}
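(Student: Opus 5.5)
The plan is to reduce to a single upper triangular matrix via the common permutation similarity, prove one inclusion by a last-nonzero-coordinate argument, and prove the other by back-substitution. The back-substitution needs a condition that the statement does not include. I will also argue that without such a condition the reverse inclusion is false.

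\emph{Step 1 (reduction to triangular form).} Put $P'(\lambda)=S^{-1}P(\lambda)S=A'_0\oplus\lambda A'_1\oplus\cdots\oplus\lambda^{m-1}A'_{m-1}$. Multiplication by the permutation matrices $S^{\pm1}$ commutes with $\oplus$ and with scalar multiplication. Hence $P(k)v=k^mv$ if and only if $P'(k)(S^{-1}v)=k^m(S^{-1}v)$, and $S^{-1}v\neq 0$ exactly when $v\neq 0$. So $\sigma_m[P]=\sigma_m[P']$. For each fixed $k\ge 0$, the matrix $T:=P'(k)$ is upper triangular with diagonal entries $t_{ii}=p_i(k)$. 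The problem is therefore to decide, for each $k$, when $k^m$ is a max-eigenvalue of $T$.

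\emph{Step 2 (the inclusion $\subseteq$).} Suppose $Tv=k^mv$ with $0\neq v\in\mathbb{R}^n_+$, and let $i$ be the largest index with $v_i>0$. Row $i$ of the equation reads $\max_{j\ge i}t_{ij}v_j=k^mv_i$. Every $v_j$ with $j>i$ vanishes, so this reduces to $t_{ii}v_i=k^mv_i$. Thus $p_i(k)=k^m$, i.e. $k\in\sigma_m[p_i]$.

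\emph{Step 3 (the inclusion $\supseteq$, and the main obstacle).} Given $p_i(k)=k^m$, I would build $v$ supported on $\{1,\dots,i\}$ by back-substitution. Set $v_i=1$ and $v_j=0$ for $j>i$. For $j=i-1,\dots,1$, put $s_j=\max_{j<l\le i}t_{jl}v_l$; row $j$ then requires $\max(t_{jj}v_j,s_j)=k^mv_j$.
\begin{itemize}
\item If $s_j=0$, take $v_j=0$.
\item If $s_j>0$ and $t_{jj}<k^m$, take $v_j=s_j/k^m$.
\item If $s_j>0$ and $t_{jj}>k^m$, there is no solution. Indeed, if $v_j>0$ we would need $t_{jj}v_j\le k^mv_j$, which fails; if $v_j=0$ we would need $s_j=0$.
\end{itemize}
So the construction succeeds under the additional hypothesis that every index $j<i$ with access to $i$ in $G_T$ satisfies $p_j(k)\le k^m$. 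This is the familiar max-algebraic spectral-class condition.

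This last step is where I expect the real difficulty, and I believe the proposition fails as stated without such a hypothesis. Take $m=1$ and $A_0=\begin{bmatrix}2&1\\0&1\end{bmatrix}$. Then $p_2(k)=1$, so $1\in\sigma_1[p_2]$. But $A_0v=v$ forces $\max(2v_1,v_2)=v_1$ in the first row, hence $v_1=v_2=0$, so $1\notin\sigma_1[P]$. In writing the proof I would therefore establish $\sigma_m[P]\subseteq\bigcup_i\sigma_m[p_i]$ unconditionally. I would obtain equality under the access condition from Step 3, for instance when each $A'_j$ is diagonal, or when $p_j(k)\le p_i(k)$ for all $j$ with access to $i$. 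I would record the example above to show that such a condition cannot simply be dropped.
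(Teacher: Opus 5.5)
Your verdict is correct: the proposition is false as stated, and the gap is in the paper's proof, not in your proposal. Your example ($m=1$, $S=I$, $A_0=\begin{bmatrix}2&1\\0&1\end{bmatrix}$) is a valid counterexample. Here $1\in\sigma_1[p_2]$, but the first row of $A_0v=v$ forces $v=0$, so $\sigma_1[P]=\{2\}\neq\{1,2\}$. The failure is not an artifact of $m=1$. With the same $A_0$ and $A_1=I$ one has $p_2(1)=1=1^2$ and $P(1)=A_0$, so $1\in\sigma_2[p_2]\setminus\sigma_2[P]$. Your Steps 1 and 2 match the paper's reduction to $P'(k)=S^{-1}P(k)S$. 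They correctly prove $\sigma_m[P]\subseteq\bigcup_i\sigma_m[p_i]$, which is the only part of the statement that holds in general.

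The paper gets the reverse inclusion in one line. It notes that $P'(k)$ is in Frobenius normal form with $1\times 1$ blocks $p_i(k)$. It then cites Theorem 4.5.4 of Butkovi\v{c}'s book to conclude that the eigenvalue of \emph{every} diagonal block is an eigenvalue of $P'(k)$. That spectral theorem says something weaker. Only the cycle means of \emph{spectral} classes are eigenvalues, i.e.\ classes $N_j$ with $\lambda(A_{jj})\ge\lambda(A_{ii})$ for every class $N_i$ that accesses $N_j$. For $1\times1$ blocks this is exactly your access condition. In your example the class $\{2\}$ violates it, since $1\to 2$ and $t_{11}=2>1$. So your corrected formulation is the right statement: the inclusion $\subseteq$ holds unconditionally, and equality holds when every $j$ accessing $i$ has $p_j(k)\le k^m$, for instance when the coefficients are diagonal. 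Your Step 3 is the sufficiency half of the spectral theorem in the triangular case.

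One small repair is needed in Step 3. Your case list omits $s_j>0$ with $t_{jj}=k^m$, which your hypothesis allows. For $k>0$ the choice $v_j=s_j/k^m$ still works. For $k=0$, however, the back-substitution from $i$ can fail. Take $A'_0=\begin{bmatrix}0&1\\0&0\end{bmatrix}$ and $i=2$: the access condition holds, but row $1$ would require $\max(0,1)=0$. The conclusion still holds. Let $c$ be the smallest index that either equals $i$ or accesses $i$. Then column $c$ of $P'(0)$ is zero, so $e_c$ shows $0\in\sigma_m[P]$. Phrase the sufficient condition as ``$k\in\sigma_m[P]$'' rather than ``the construction from $i$ succeeds.''
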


\begin{proof}
By definition, \(k \in \sigma_m[P]\) if and only if there exists \(v\geq 0\), \(v\neq 0\), such that
\[
P(k) v = k^{m} v.\] 
Similarly, for the scalar polynomial \(p_i\), we have \(k\in\sigma_m[p_i]\) if and only if \(p_i(k)=k^m\). 
Set \(P'(\lambda):=S^{-1}P(\lambda)S = \displaystyle \bigoplus_{r=0}^{m-1} \lambda^r A'_r\). 
Since each \(A'_r\) is upper triangular, so is
\[
P'(k)=A'_0 \oplus kA'_1 \oplus \cdots \oplus k^{m-1}A'_{m-1}. \] 
The $i^{th}$ diagonal entry of \(P'(k)\) is exactly \(p_i(k)\). An upper triangular matrix is already a block
upper triangular matrix with $1 \times 1$ irreducible diagonal blocks, and hence is in Frobenius normal form (FNF). 
These blocks are precisely the scalars \(p_i(k)\) on the diagonal. Then, by Theorem $4.5.4$ of \cite{Butkovic}, it follows 
that the eigenvalues of all irreducible blocks are eigenvalues of the matrix $P'$ and hence of \(P\) by similarity. We thus 
have $\sigma_m[P] = \displaystyle \bigcup_{i=1}^{n} \sigma_m[p_i]$.
\end{proof}

\begin{definition}\label{def:SJR-diagonal-entry}
Given a bounded family $\Psi$ of matrix polynomials such that the set $\Psi_0$ of all coefficient matrices is a simultaneously triangularizable family through a permutation matrix $S$, let $\Delta\Psi_0:= \{S^{-1}AS\ \mid A \in \Psi_0\}$ and 
$\Delta\Psi_0^{(i)} := \{ (S^{-1}AS)_{ii}\ \mid A \in \Psi_0\}$. For this scalar pool, we define the joint spectral radius by 
\[\rho_\eta (\Delta\Psi_0^{(i)}) = 
\displaystyle \limsup_{k \to \infty} \big(\sup_{a_1,a_2,\cdots,a_k \in  \Delta\Psi_0^{(i)}} a_1a_2\cdots a_k \big)^{1/k}. \]
\end{definition}

\begin{lemma}\label{lem:limsup}
Let $W \subset \mathbb{R}_+$ be a bounded set. Then, 
\[\displaystyle \limsup_{k \to \infty} (\sup_{a_1,a_2, \cdots, a_k \in  \Delta\Psi_0^{(i)}} a_1a_2 \cdots a_k)^{1/k} = 
\displaystyle \sup_{a \in W}\ a.\]
\end{lemma}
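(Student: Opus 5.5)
The statement contains an obvious typo: the supremum on the left should range over $a_1,\dots,a_k\in W$ rather than over $\Delta\Psi_0^{(i)}$. I read it that way, with $W=\Delta\Psi_0^{(i)}$ as the intended application. Set $s:=\sup_{a\in W} a$, which is finite because $W$ is bounded. The plan is to show that, for every $k\ge 1$,
\[
\sup_{a_1,\dots,a_k\in W} a_1a_2\cdots a_k \;=\; s^k .
\]
Taking $k$-th roots then gives the constant sequence $s$, so the $\limsup$ is in fact a limit and equals $s$.

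The argument has two inequalities, and both use only the fact that multiplication on $\mathbb{R}_+$ is monotone.

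For the upper bound, any $a_1,\dots,a_k\in W$ satisfy $0\le a_j\le s$. Hence $a_1\cdots a_k\le s^k$, and the supremum is at most $s^k$.

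For the lower bound, take $a_1=\cdots=a_k=a$ for a single $a\in W$. This gives
\[
\sup_{a_1,\dots,a_k\in W} a_1\cdots a_k\;\ge\; \sup_{a\in W} a^k .
\]
The map $t\mapsto t^k$ is continuous and nondecreasing on $\mathbb{R}_+$, so $\sup_{a\in W} a^k=(\sup_{a\in W}a)^k=s^k$. To check this directly, choose $a^{(\ell)}\in W$ with $a^{(\ell)}\to s$; then $(a^{(\ell)})^k\to s^k$. This bound is the same single-element comparison used for the lower bound in Theorem \ref{thm:etacap-rho-eta-inequality}.

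The only delicate point is that the supremum need not be attained. The approximating-sequence argument above handles this. The degenerate cases $W=\{0\}$ and $W=\emptyset$ need a convention; I would note that $s=0$ there and both sides vanish. I do not expect any genuine obstacle beyond this.
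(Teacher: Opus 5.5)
Your proof is correct and follows essentially the same route as the paper: the upper bound comes from $a_j\le \sup W$, and the lower bound from constant tuples $a_1=\cdots=a_k=a$ with $a$ approaching the supremum. The paper uses an $\varepsilon$-argument where you use an approximating sequence, and it also silently reads the index set as $W$. Your handling of the degenerate case $\sup W=0$ is a small improvement, since the paper's claim that $M\in(0,\infty)$ excludes it unnecessarily.
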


\begin{proof}
Since $W \subset \mathbb{R}_+$ is bounded and nonempty, we may set $M := \displaystyle \sup_{a \in W} a \in (0,\infty)$. 
For each $k \in \mathbb{N}$, define $P_k := \displaystyle \sup_{a_1,\dots,a_k \in W} \bigl(a_1 a_2 \cdots a_k\bigr)^{1/k}$. 
We shall show that $\displaystyle \limsup_{k \to \infty} P_k = M$. To prove this, let us fix $k \in \mathbb{N}$ and choose arbitrary $a_1,\dots, a_k \in W$. By the very definition of $M$, we have $a_j \leq M$ for each $j=1,\dots,k$, so that 
\[
    a_1 a_2 \cdots a_k \;\le\; M^k,
    \qquad\text{hence}\qquad
    \bigl(a_1 a_2 \cdots a_k\bigr)^{1/k} \leq M.
\]
Taking the supremum over all $a_1,\dots, a_k \in W$ we see that $P_k \leq M \quad \text{for all } k$. Therefore
$\displaystyle \limsup_{k \to \infty} P_k \leq M$. If $\varepsilon > 0$ is arbitrary, then by the definition of $M$ there exists $a_\varepsilon \in W$ such that $a_\varepsilon > M - \varepsilon$. For each $k \in \mathbb{N}$, consider the choice 
$a_1 = a_2 = \cdots = a_k = a_\varepsilon$. Then $\displaystyle \bigl(a_1 a_2 \cdots a_k\bigr)^{1/k} 
= \bigl(a_\varepsilon^k\bigr)^{1/k} = a_\varepsilon > M - \varepsilon$. Since $P_k$ is the supremum over all such products, 
we obtain $P_k \geq a_\varepsilon > M - \varepsilon \quad \text{for all } k$. 
Hence, $\displaystyle \limsup_{k \to \infty} P_k \geq M - \varepsilon$. $\varepsilon>0$ being arbitrary, we conclude that $\displaystyle \limsup_{k \to \infty} P_k \geq M$. Thus, $\displaystyle \limsup_{k \to \infty} P_k = M = \sup_{a \in W} a$.
\end{proof}

We are now in a position to prove the main result of this section. 

\begin{theorem}\label{thm:SJR-triangular-family}
Let $\Psi$ be a bounded family of matrix polynomials with $\Psi_0$ being a simultaneously triangularizable family. Assume  further that $\|.\|$ is a permutation invariant monotone norm and $\eta$ is induced by this norm. Then, 
 $\rho_\eta(\Psi) = \displaystyle \max_i \rho_\eta(\Delta\Psi_0^{(i)})$.
\end{theorem}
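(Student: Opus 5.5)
I read the left-hand side $\rho_\eta(\Psi)$ as the joint spectral radius $\rho_\eta(\Psi_0)$ of the coefficient pool, as in Equation \eqref{Rho-eta-Psi_0}. With that reading, the plan is to reduce to the upper triangular pool $\Delta\Psi_0$, and then compute its joint spectral radius by the maximum cycle mean formula of Theorem \ref{Determining the JSR}.

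\emph{Step 1 (removing $S$).} Let $S$ be the common permutation matrix. For any $A_1,\dots,A_k\in\Psi_0$ we have $S^{-1}A_1\cdots A_kS=(S^{-1}A_1S)\cdots(S^{-1}A_kS)$. Since $\|\cdot\|$ is permutation invariant, Lemma \ref{lem:perm-norm} gives
\[
\eta_{\|\cdot\|}(A_1\cdots A_k)=\eta_{\|\cdot\|}\big((S^{-1}A_1S)\cdots(S^{-1}A_kS)\big).
\]
Taking suprema, $k$-th roots and $\limsup$ yields $\rho_\eta(\Psi_0)=\rho_\eta(\Delta\Psi_0)$. So we may assume every $A\in\Psi_0$ is upper triangular.

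\emph{Step 2 (passing from $\eta$ to $\rho$ and $\mu$).} Next I would show $\rho_\eta(\Delta\Psi_0)=\rho(\Delta\Psi_0)$, the usual joint spectral radius of Theorem \ref{Determining the JSR}. Take $\|\cdot\|$ on $M_n(\mathbb{R})$ to be the operator norm induced by the monotone norm. For a nonnegative matrix $A$ and any real $x$, the vector $|x|$ satisfies $|Ax|\le A|x|$ entrywise. Monotonicity then gives $\eta_{\|\cdot\|}(A)\le\|A\|\le c\,\eta_{\|\cdot\|}(A)$ for a constant $c$ depending only on the norm; alternatively one can compare both quantities with $\max_{ij}a_{ij}$ via equivalence of norms in finite dimensions. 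These multiplicative constants disappear after taking $k$-th roots. Then Theorem \ref{Determining the JSR} gives $\rho(\Delta\Psi_0)=\mu(\Delta\Psi_0)=\mu(S(\Delta\Psi_0))$, where $S(\Delta\Psi_0)=\bigoplus_{A\in\Delta\Psi_0}A$ is the entrywise supremum. This supremum is finite because the family is bounded.

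\emph{Step 3 (evaluating the cycle mean).} An entrywise supremum of upper triangular nonnegative matrices is again upper triangular, so the digraph of $S(\Delta\Psi_0)$ has only edges $i\to j$ with $i\le j$. Its only circuits are therefore the loops at the vertices, and hence
\[
\mu(S(\Delta\Psi_0))=\max_i\big(S(\Delta\Psi_0)\big)_{ii}=\max_i\sup_{A\in\Psi_0}(S^{-1}AS)_{ii}.
\]
Finally, Lemma \ref{lem:limsup}, applied to the bounded set $W=\Delta\Psi_0^{(i)}$, identifies $\sup_{a\in\Delta\Psi_0^{(i)}}a$ with $\rho_\eta(\Delta\Psi_0^{(i)})$. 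This gives the claimed formula.

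\emph{Main obstacle.} The delicate point is Step 2: justifying that the $\eta$-based joint spectral radius coincides with the norm-based one to which the Berger--Wang formula of Theorem \ref{Determining the JSR} applies. The key inequality there is the two-sided comparison of $\eta_{\|\cdot\|}$ with a genuine matrix norm on nonnegative matrices. A minor additional issue is the degenerate case of zero diagonal entries, which is harmless since $\mu$ is then $0$ on those vertices. If one instead wanted to avoid Theorem \ref{Determining the JSR}, the fallback is a direct argument. For the lower bound, use $\eta_{\|\cdot\|}(A_1\cdots A_k)\ge c\,(A_1\cdots A_k)_{ii}=c\prod_j(A_j)_{ii}$. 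For the upper bound, expand the $(i,j)$ entry of a product of upper triangular matrices as a sum over monotone paths: each path has at most $n-1$ off-diagonal steps, and each such step is bounded by a constant.
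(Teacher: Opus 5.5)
Your proof is correct, and your reading of $\rho_\eta(\Psi)$ as $\rho_\eta(\Psi_0)$ matches the paper's own proof and Example \ref{eg-1}. Your main route, however, is different from the paper's.

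After the same reduction via Lemma \ref{lem:perm-norm}, the paper uses exactly your fallback:
\begin{itemize}
\item the lower bound comes from testing on $e_i$, which gives $\eta_{\|\cdot\|}(B_1\cdots B_k)\ge\prod_r (B_r)_{ii}$;
\item the upper bound comes from noting that every term in an entry of a product of $k$ upper triangular matrices uses at most $n-1$ off-diagonal factors, so $(B_1\cdots B_k)_{ij}\le C s^k$ with $s=\max_i\sup_B B_{ii}$.
\end{itemize}

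Your main route instead hands all the asymptotic work to the Muller--Peperko results in Theorem \ref{Determining the JSR}. The triangular structure then enters only through the fact that the digraph of the entrywise supremum has loops as its only circuits. This is the same computation the paper performs in Example \ref{eg-1} as a check.

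What your route buys:
\begin{itemize}
\item It proves the general identity $\rho_\eta(\Psi_0)=\mu\bigl(\bigoplus_{A\in\Psi_0}A\bigr)$ for every bounded pool. Triangularizability is used only to evaluate $\mu$.
\item Permutation invariance of the norm becomes inessential, since cycle means are unchanged by permutation similarity.
\item The degenerate case $s=0$ needs no separate treatment. There the paper's constant $C=\max\{1,M^{n-1}s^{-(n-1)}\}$ is undefined.
\end{itemize}

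What it costs is your Step 2, which the paper never needs and which you should write out more carefully. The seminorm $\eta_{\|\cdot\|}$ is built from the max action $A\otimes x$, while an operator norm uses the ordinary product. So you need $A\otimes y\le Ay\le n\,(A\otimes y)$ for $y\ge 0$. Also, monotonicity is only assumed on $\mathbb{R}^n_+$, so passing from $\|Ax\|$ to $\|A|x|\|$ costs a norm-equivalence constant.

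Your alternative is the cleanest way to do this: compare $\eta_{\|\cdot\|}(A)$ directly with $\max_{ij}a_{ij}$. Use $\eta_{\|\cdot\|}(A)\ge a_{ij}\|e_i\|/\|e_j\|$ for the lower comparison, and $A\otimes x\le(\max_{ij}a_{ij})(\max_k x_k)\mathbf{1}$ together with $\max_k x_k\le c\|x\|$ for the upper one. All the constants disappear after taking $k$-th roots.
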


\begin{proof}
Let the matrices in $\Psi_0$ be simultaneously triangularizable through a permutation matrix $S$. For each $A_i \in \Psi_0$, assume $B_i = S^{-1}A_i S$ is upper triangular. Then, using Lemma \ref{lem:perm-norm}, we have $\eta_{\|\cdot \|}(A) = \eta_{\|\cdot \|}(B) $. It therefore suffices to verify $\rho_\eta(\Psi)$ for $B_i \in \Delta\Psi_0$. Let, 
$s:= \displaystyle \max_i \sup_{B \in \Delta\Psi } B_{ii}$ and $e_i$ be the $i^{th}$ standard basis vector. Then, 
 $|(B_1 B_2 \cdots B_k) e_i \| \geq (B_1  B_2  \cdots  B_k)_{ii} \cdot \|e_i\|$. 
This gives $\eta_{\|\cdot \|}(B_1 B_2 \cdots B_k) \geq (B_1 B_2  \cdots B_k)_{ii} = \prod_{r=1}^k (B_r)_{ii}.$ 
We then have, $\rho_\eta (\Delta\Psi) = \rho_\eta (\Psi) \geq \displaystyle \sup_{B \in \Delta\Psi } B_{ii}$.
Thus, $\rho_\eta (\Psi) \geq \displaystyle \max_i \sup_{B \in \Delta\Psi } B_{ii}= s$. Since $\Psi$ is a bounded set, we have $\displaystyle \sup_{B \in \Delta\Psi } B_{ij} < M$, for some $M > 0$. Therefore, each contributing term in 
$(B_k  \cdots  B_1)_{ij}$ contains at most \(n-1\) off-diagonal factors, each bounded above by \(M\), and at least \(k-(n-1)\) diagonal factors, each bounded above by \(s\). Hence,
$(B_1 B_2 \cdots B_k)_{ij} \leq M^{n-1}s^{k-(n-1)} \leq Cs^k, \ \text{ with } C:=\max\{1,M^{n-1}s^{-(n-1)}\}$. 
Since all norms on $\mathbb{R}_+^n$ are equivalent, there exists a $c > 0$ such that 
$\eta_{\|\cdot\|} (B_1 B_2  \cdots  B_k) \leq \displaystyle c \max_{i,j} (B_1 B_2  \cdots B_k)_{ij} \leq cCs^k$.
Taking the $k^{th}$ root and the $\limsup$, the above inequality becomes, $\rho_\eta (\Psi) \leq s$. Combining these two inequalities, we have, $\rho_\eta(\Psi) = \displaystyle \max_i \rho_\eta(\Delta\Psi_0^{(i)})$.
\end{proof}

We illustrate the above theorem by means of an example.

\begin{example}\label{eg-1}
Let $\Psi = \{P_1, P_2\}$ where $P_i(\lambda) = A_0^{(i)} \oplus \lambda A_1^{(i)}$ are max-matrix polynomials over $M_3(\mathbb{R}_+)$, with coefficient matrices
	\[
	A_0^{(1)} =
	\begin{pmatrix} 
		3 & 0 & 2 \\ 
		0 & 1 & 0 \\ 
		0 & 2 & 4 
	\end{pmatrix},
	\ 
	A_1^{(1)} =
	\begin{pmatrix} 
		2 & 3 & 1 \\ 
		0 & 2 & 0 \\ 
		0 & 1 & 3 
	\end{pmatrix},
	\ 
	A_0^{(2)} =
	\begin{pmatrix} 
		1 & 2 & 3 \\ 
		0 & 0 & 0 \\ 
		0 & 1 & 2 
	\end{pmatrix},
	\ \text{and } \ 
	A_1^{(2)} =
	\begin{pmatrix} 
		4 & 1 & 0 \\ 
		0 & 1 & 0 \\ 
		0 & 3 & 2 
	\end{pmatrix}.
	\] 
The coefficient pool is $\Psi_0 = \bigl\{A_0^{(1)},\, A_1^{(1)},\, A_0^{(2)},\,A_1^{(2)}\bigr\}$. Let $S$ be the permutation 
matrix corresponding to the transposition $(2\;3)$; that is, 
	\[
	S =
	\begin{pmatrix} 
		1 & 0 & 0 \\ 
		0 & 0 & 1 \\ 
		0 & 1 & 0 
	\end{pmatrix} = S^{-1}.
	\] 
For any $A \in \Psi_0$, a direct computation gives $S^{-1}AS$ is upper triangular. Hence $\Psi_0$ is simultaneously triangularizable through $S$. Applying this to each element of $\Psi_0$, we obtain 
	\begin{align*}
		S^{-1}A_0^{(1)}S & 
		= \begin{pmatrix} 
			3 & 2 & 0 \\ 
			0 & 4 & 2 \\ 
			0 & 0 & 1 
		\end{pmatrix},
		&
		S^{-1}A_1^{(1)}S & = 
		\begin{pmatrix} 
			2 & 1 & 3 \\ 
			0 & 3 & 1 \\ 
			0 & 0 & 2 
		\end{pmatrix},
		\\[6pt]
		S^{-1}A_0^{(2)}S & = 
		\begin{pmatrix} 
			1 & 3 & 2 \\ 
			0 & 2 & 1 \\ 
			0 & 0 & 0 
		\end{pmatrix},
		&
		S^{-1}A_1^{(2)}S & = 
		\begin{pmatrix} 
			4 & 0 & 1 \\ 
			0 & 2 & 3 \\ 
			0 & 0 & 1 
		\end{pmatrix}.
	\end{align*}
	
Set $\Delta\Psi_0 = \{S^{-1}AS \mid A \in \Psi_0\}$. Reading off the diagonal entries at each position $i = 1,2,3$, the scalar 
pools are,
	\[
	\Delta\Psi_0^{(1)} = \{3,\,2,\,1,\,4\},
	\qquad
	\Delta\Psi_0^{(2)} = \{4,\,3,\,2,\,2\},
	\qquad
	\Delta\Psi_0^{(3)} = \{1,\,2,\,0,\,1\}.
	\] 
By Lemma \ref{lem:limsup}, $\rho_\eta\!\left(\Delta\Psi_0^{(1)}\right) = \sup \Delta\Psi_0^{(1)} = 4,
	\ \rho_\eta\!\left(\Delta\Psi_0^{(2)}\right) = \sup \Delta\Psi_0^{(2)} = 4,$ \\
	$\rho_\eta\!\left(\Delta\Psi_0^{(3)}\right) = \sup \Delta\Psi_0^{(3)} = 2$. 
We then infer from Theorem~\ref{thm:SJR-triangular-family} that,
	\[
	\rho_\eta(\Psi_0)
	= \max_{i=1,2,3}\,\rho_\eta\!\left(\Delta\Psi_0^{(i)}\right)
	= \max\{4,\,4,\,2\} = 4.
	\]
One can verify this directly from statements $(1)$ and $(2)$ of Theorem \ref{Determining the JSR}, which actually gives 
	$\rho_\eta(\Psi_0) = \mu (A_0^{(1)}\oplus A_1^{(1)} \oplus A_0^{(2)}\oplus A_1^{(2)})$. We also have, 
	\[ A_0^{(1)}\oplus A_1^{(1)} \oplus A_0^{(2)}\oplus
	A_1^{(2)} = \begin{pmatrix}
		4&3&3\\
		0&2&0\\
		0&3&4
	\end{pmatrix}.\]
Thus, $\mu (A_0^{(1)} \oplus A_1^{(1)} \oplus A_0^{(2)} \oplus A_1^{(2)}) = 4$, confirming $\rho_\eta(\Psi_0) = 4$.
\end{example}

\subsection{Common eigenvectors and periodic points}\label{Sec:com-eigenvector}\hspace*{0.5cm}

This is the last section of the paper where we investigate self maps on $\mathbb{R}_+^n$ of the form $Ax$, for some 
$A\in M_n(\mathbb{R}_+)$ and $x \in \mathbb{R}_+^n$. For a bounded family of matrices $\Sigma$, and matrix 
polynomials with coefficients from this family $\Sigma$, we examine the relation between common eigenvectors and 
periodic points of the above maps with the joint spectral radius. More precisely, given $A\in M_n(\mathbb{R}_+)$, we 
consider the max--linear self-map $T_A:\mathbb{R}_+^n\to\mathbb{R}_+^n,\ T_A(x)= Ax$. A vector $x\in\mathbb{R}_+^n$ is 
called a fixed point of $A$ (or of $T_A$) if $Ax = x$. More generally, $x$ is called a periodic point of $A$ if there exists
$q\in\mathbb{N}$ such that $A^{q} x = x \ (\text{equivalently } T_A^{\,q}(x) = x)$. 
The least such $q$ (when it exists) is called the period of $x$.  Before proceeding further with our results, the following 
is worth pointing out. In \cite{Butkovic-CuninghameGreen}, P. Butkovic and Cuninghame-Green discussed the periodic 
behaviour of max-algebraic matrices. In particular, for an irreducible matrix $A$, there exist integers $p\geq 1$ and 
$k_0$ such that $A^{k+p}=\lambda(A)^{p} A^k$ for all $k \geq k_0$, where $\lambda(A)$ is the max-algebraic eigenvalue (maximum cycle mean). The smallest such $p$ is called the period of $A$, and is denoted by $\mathrm{per}(A)$. A matrix 
is called robust if every orbit eventually reaches an eigenvector of $A$; equivalently, $\mathrm{per}(A)=1$. This 
single-matrix robustness condition can be seen as a precursor to our setting: where Butkovič and Cuninghame-Green require periodicity one for a single matrix to guarantee orbit convergence, we replace this matrix-wise condition with a single 
global bound in terms of the joint spectral radius of the entire collection $\Sigma$, and characterize convergence to 
periodic points for all word products simultaneously.

\begin{proposition}\label{prop:com-eigenvector-polynomial-eigenvalue}
Let $P(\lambda) = A_0 \oplus \lambda A_1 \oplus \cdots \oplus \lambda^{m-1}A_{m-1}$ be a matrix polynomial. 
Assume that there exists a nontrivial vector $v \in \mathbb{R}^n_+$ such that $A_j v = \alpha_jv, \text{ for } 
j=0,1,\cdots,m-1$. Define $q(\lambda) =  \alpha_0 \oplus \lambda \alpha_1 \oplus \cdots \oplus \lambda^{m-1}\alpha_{m-1}$. 
If $q(k) = k^m$ for some $k \geq 0$, then, $P(k)v = k^m v, \ \text{ equivalently, }\ k \in \sigma_m[P]$.
\end{proposition}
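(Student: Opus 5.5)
The plan is to compute $P(k)v$ directly, using only two facts: max-linearity of the action of a matrix on a vector, and the way scalars interact with $\oplus$. The key identity I will prove is
\[
P(k)v = q(k)\, v .
\]
Once this holds, the hypothesis $q(k) = k^m$ immediately gives $P(k)v = k^m v$ with $v \neq 0$. By the definition of the max-spectrum $\sigma_m[P]$ recalled in Section 2, this means $k \in \sigma_m[P]$.

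For the identity, first expand the polynomial:
\[
P(k) = A_0 \oplus kA_1 \oplus \cdots \oplus k^{m-1}A_{m-1}.
\]
Two distributivity facts in the max-algebra are needed, both for nonnegative scalars $c$ and nonnegative matrices $A,B$.

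1. Matrix-vector multiplication distributes over $\oplus$ of matrices: $(A \oplus B)x = Ax \oplus Bx$. Entrywise this reads $\max_l \max(a_{il},b_{il})x_l = \max(\max_l a_{il}x_l, \max_l b_{il}x_l)$, which holds because both sides are a maximum over the same finite set of nonnegative numbers.
2. Scalars pull out: $(cA)x = c(Ax)$. This holds because multiplication by $c \geq 0$ commutes with $\max$.

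Applying these, I get
\[
P(k)v = \bigoplus_{j=0}^{m-1} k^j (A_j v) = \bigoplus_{j=0}^{m-1} k^j \alpha_j v .
\]

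It remains to factor out $v$ from the last expression. Coordinatewise, the $i$-th entry is $\max_j (k^j\alpha_j v_i)$. Since $v_i \geq 0$, this equals $\big(\max_j k^j\alpha_j\big)\, v_i = q(k)\, v_i$. This coordinatewise factoring is the only step requiring any care, because it relies on the nonnegativity of $v$, $k$ and the $\alpha_j$. All of these are nonnegative by hypothesis: $v \in \mathbb{R}^n_+$ and $k \geq 0$ are given, and each $\alpha_j$ is a max-eigenvalue of $A_j$, hence lies in $\mathbb{R}_+$. So the step is harmless, and there is no genuine obstacle in the argument.
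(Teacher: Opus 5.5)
Your proposal is correct and follows essentially the same route as the paper, which expands $P(k)v=\bigoplus_{j} k^j(A_jv)=\bigoplus_j (k^j\alpha_j)v=q(k)v=k^m v$ in one line using max-linearity. Your proposal adds explicit checks of the distributivity and of the coordinatewise factoring, which rely on $k$, the $\alpha_j$ and $v$ being nonnegative; the paper leaves these steps implicit.
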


\begin{proof}
Using max–linearity and the hypothesis, we have 
$P(k) v = \displaystyle \bigoplus_{j=0}^{m-1} k^j (A_j  v) 
= \displaystyle \bigoplus_{j=0}^{m-1} (k^j \alpha_j)\, v 
= \displaystyle \big(q(k)\big)\, v 
= k^m v$.
\end{proof}

The following proposition is worth pointing out. 

\begin{proposition}\label{prop:eigen-Cp-1}
 Let $P(\lambda) = A_0\oplus \lambda A_1\oplus\cdots\oplus \lambda^{m-1}A_{m-1}$ be a matrix polynomial with all the 
 $A_j$'s irreducible. Let $C_P$ be the associated companion matrix. If there exists a nonzero periodic point of 
 $y \mapsto C_P y$, then $\mu(C_P) = 1$.
\end{proposition}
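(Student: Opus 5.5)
The plan is to reduce the statement to the cyclicity theorem for irreducible max-algebraic matrices recalled at the beginning of Section \ref{Sec:com-eigenvector} (Butkovi\v{c}--Cuninghame-Green \cite{Butkovic-CuninghameGreen}). Since every coefficient $A_j$ is irreducible, Remark \ref{rem:irreducible-Cp} gives that the companion matrix $C_P$ is irreducible. For an irreducible matrix, $\lambda(C_P)$ coincides with the maximum cycle mean $\mu(C_P)$. Hence there are integers $p \geq 1$ and $k_0$ such that
\[
C_P^{k+p} = \mu(C_P)^{p}\, C_P^{k} \quad \text{for all } k \geq k_0 .
\]
Iterating this identity gives $C_P^{k+rp} = \mu(C_P)^{rp} C_P^{k}$ for every $r \in \mathbb{N}$ and every $k \geq k_0$.

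Now let $y \neq 0$ be a periodic point, so that $C_P^{q} y = y$ for some $q \geq 1$. Then $C_P^{tq} y = y$ for every $t \in \mathbb{N}$. I would fix $t$ with $tq \geq k_0$ and apply the iterated identity with $k = tq$ and $r = q$. The exponent $tq + pq = (t+p)q$ is again a multiple of $q$, so
\[
y = C_P^{(t+p)q} y = \mu(C_P)^{pq}\, C_P^{tq} y = \mu(C_P)^{pq}\, y .
\]
Since $y \neq 0$, some coordinate of $y$ is positive. Comparing that coordinate gives $\mu(C_P)^{pq} = 1$. Because $\mu(C_P) \geq 0$, this forces $\mu(C_P) = 1$. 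Degenerate cases are covered automatically: if $\mu(C_P) = 0$, the same display would give $y = 0$, which contradicts $y \neq 0$.

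The only real dependency is the irreducibility of $C_P$, which feeds the cyclicity theorem. I will take it from Remark \ref{rem:irreducible-Cp} as stated. If a cross-check is wanted, a backup argument via the max Gelfand formula $\mu(C_P) = \lim_k \|C_P^k\|^{1/k}$ handles the case $\mu(C_P) < 1$: then $C_P^{k} \to 0$, so $y = C_P^{tq} y \to 0$, a contradiction. The case $\mu(C_P) > 1$ still needs the cyclicity identity, because a single irreducible block then forces growth. The main obstacle is therefore just to make sure the transient index $k_0$ is passed along multiples of $q$, which the choice $tq \geq k_0$ above handles.
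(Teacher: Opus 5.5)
Your argument is correct, but it takes a different route from the paper. The paper never uses the cyclicity theorem. From $C_P^q x = x$ it forms the orbit maximum $y = x\oplus C_P x\oplus\cdots\oplus C_P^{q-1}x$. By max-linearity $C_P y = y$, because the last term $C_P^q x$ wraps around to $x$, and $y\ge x$ gives $y\neq 0$. So $1$ is a max-eigenvalue of $C_P$. Since $C_P$ is irreducible (Remark~\ref{rem:irreducible-Cp}), its only max-eigenvalue is $\mu(C_P)$, hence $\mu(C_P)=1$.

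You instead feed the periodic vector directly into the identity $C_P^{k+p}=\mu(C_P)^p C_P^k$ for $k\ge k_0$. You choose $k=tq\ge k_0$ and shift by $pq$, so that both exponents are multiples of $q$, and obtain $y=\mu(C_P)^{pq}y$. The bookkeeping is right, and $\mu(C_P)=0$ is excluded automatically.

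Both proofs rest on the same input, the irreducibility of $C_P$. The paper's route then needs only the elementary fact, already recalled in the Preliminaries, that an irreducible matrix has a unique max-eigenvalue, and it produces an explicit fixed point. The cyclicity theorem you use is considerably deeper, since it involves the critical graph and its cyclicity. In exchange, your route avoids building an auxiliary eigenvector.

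Your side remark that the case $\mu(C_P)>1$ ``still needs the cyclicity identity'' is not accurate. The orbit-maximum construction handles $\mu(C_P)<1$ and $\mu(C_P)>1$ at once, with no growth estimates.
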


\begin{proof}
Assume that $C_P^{q} x = x$ for some $x \neq 0$ and $q \geq 1$. Define 
$y:= x \oplus\ (C_P x) \oplus\ (C_P^{2} x)\ \oplus \cdots \oplus\ (C_P^{q-1} x)$. 
By max--linearity, we have $C_P y = (C_P x) \oplus (C_P^{2} x) \oplus \cdots \oplus (C_P^{q} x) 
= (C_P x) \oplus (C_P^{2} x) \oplus \cdots \oplus x = y$, 
where the last equality comes from the fact that $C_P^{q} x = x$. Hence $y \neq 0$ is a max--eigenvector of $C_P$
with eigenvalue $1$. Since the $A_j$'s are irreducible, by Remark \ref{rem:irreducible-Cp}, $C_P$ is irreducible. 
For an irreducible max--algebraic matrix, the (unique) eigenvalue admitting a nonzero nonnegative
eigenvector equals the maximum cycle geometric mean $\mu(C_P)$. Therefore $\mu(C_P) = 1$.
\end{proof}

For a non-empty subset $S$ of $\mathbb{R}^n_+$, we denote by $\mathcal{LC}(S)$ the max--cone generated by $S$. For a bounded family of matrices $\Sigma = \{A_1,\dots,A_N\} \subset M_n(\mathbb{R}_+)$, we denote the collection of common eigenvectors of $\Sigma$ as $E$. Note that $\mathcal{LC}(E)$ is invariant for the collection $\Sigma$, as well 
as for any matrix polynomial $P(\lambda)$ with coefficients from the set $\Sigma$. For a $p-$letter word 
$\omega = \omega_1 \omega_2 \cdots \omega_p$ on 
$\{1,\dots,N\}$, we define the associated product by 
$A_\omega:= A_{\omega_p} A_{\omega_{p-1}} \cdots A_{\omega_1}$. The following theorem replaces the individual spectral constraints $\mu(A_i)\leq 1$ in Theorem $3.4$ of \cite{DEDS-dynamic-product} by the single joint spectral radius
condition $\rho_\eta(\Sigma)\leq 1$, which controls the growth of all products uniformly. This global bound is essential in our setting, as it allows one to deduce full convergence of $A_\omega^k  x$ for every word $\omega$ using a single stability 
parameter, rather than relying on matrix–wise spectral estimates. Our main result is the following.

\begin{theorem}\label{thm:JSR-common-eig-periodic}
Let $\Sigma = \{A_1,\dots,A_N\}\subset M_n(\mathbb{R}_+)$. Assume that $\rho_\eta(\Sigma)\leq 1$ and that $\Sigma$ 
admits a nontrivial set $E$ of common max-eigenvectors. Then for every $x \in \mathcal{LC}(E)$, and a word 
$\omega = \omega_1 \omega_2 \cdots \omega_p$ on $\{1,\dots,N\}$, the limit
$\displaystyle \lim_{k\to\infty} A_\omega^k\otimes x :=  \xi_{x,\omega}$ exists, and $\xi_{x,\omega}$ is a periodic point of $A_\omega$.
\end{theorem}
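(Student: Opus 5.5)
The plan is to reduce everything to the scalar eigenvalues carried by the common eigenvectors. I will show that each such eigenvalue along the word $\omega$ is at most $1$. Then $A_\omega^k\otimes x$ becomes a finite max-combination of geometric sequences, and it converges to a fixed point, which in particular is a periodic point of $A_\omega$.

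\emph{Step 1 (eigenvalues are controlled by the JSR).} For each $e\in E$ write $A_i e=\alpha_i(e)\,e$ with $\alpha_i(e)\ge 0$. For a word $\omega=\omega_1\cdots\omega_p$, max-linearity gives $A_\omega e=\alpha_\omega(e)\,e$ with $\alpha_\omega(e):=\prod_{r=1}^p\alpha_{\omega_r}(e)$, and $A_\omega^k e=\alpha_\omega(e)^k e$. Since $e\neq 0$, the definition of $\eta_{\|\cdot\|}$ yields
\[
\eta_{\|\cdot\|}(A_\omega^k)\ \ge\ \frac{\|A_\omega^k e\|}{\|e\|}=\alpha_\omega(e)^k .
\]
The matrix $A_\omega^k$ is a product of $kp$ elements of $\Sigma$. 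Hence $\alpha_\omega(e)^{k}\le \sup_{\Sigma^{kp}}\eta_{\|\cdot\|}$. Taking $(kp)$-th roots and the $\limsup$ over $k$ (a subsequence of the $\limsup$ defining $\rho_\eta(\Sigma)$) gives $\alpha_\omega(e)^{1/p}\le\rho_\eta(\Sigma)\le 1$, so $\alpha_\omega(e)\le 1$ for every $e\in E$. The same argument with $\omega$ a single letter gives $\alpha_i(e)\le 1$.

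\emph{Step 2 (convergence on the cone).} Take $x\in\mathcal{LC}(E)$ and write it as a finite max-combination $x=\bigoplus_{t=1}^{r} c_t e_t$ with $c_t\ge 0$ and $e_t\in E$. By max-linearity,
\[
A_\omega^k\otimes x=\bigoplus_{t=1}^r c_t\,\alpha_\omega(e_t)^k\, e_t .
\]
By Step 1, each scalar sequence $\alpha_\omega(e_t)^k$ converges: to $1$ if $\alpha_\omega(e_t)=1$, and to $0$ if $\alpha_\omega(e_t)<1$. The coordinatewise maximum of finitely many convergent sequences in $\mathbb{R}^n_+$ converges to the maximum of the limits, since $\max$ is continuous. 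Therefore the limit exists and equals
\[
\xi_{x,\omega}=\bigoplus_{t:\ \alpha_\omega(e_t)=1} c_t e_t .
\]

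\emph{Step 3 (periodicity).} Applying $A_\omega$ to this expression gives $A_\omega\xi_{x,\omega}=\bigoplus_{\alpha_\omega(e_t)=1}c_t\,\alpha_\omega(e_t)e_t=\xi_{x,\omega}$. So $\xi_{x,\omega}$ is a fixed point of $A_\omega$, hence a periodic point, with period $1$. Alternatively, $\xi_{x,\omega}=0$, which is trivially fixed. The same conclusion can also be read off from continuity: $A_\omega\xi=\lim_k A_\omega^{k+1}x=\xi$.

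\emph{Main obstacle.} The delicate point is Step 1, namely passing from $\rho_\eta(\Sigma)\le 1$ to $\alpha_\omega(e)\le 1$. This requires comparing the $\limsup$ along multiples $kp$ with the full $\limsup$, which is harmless because the limsup over a subsequence is at most the full limsup. A second, minor issue arises if $\mathcal{LC}(E)$ is meant to include limits of max-combinations. In that case I would use that $\eta_{\|\cdot\|}(A_\omega^k)$ is uniformly bounded on the closure of $\mathcal{LC}(E)$: by Step 1, $A_\omega^k$ is nonexpansive on finite combinations, and monotonicity of the norm carries this over. An $\varepsilon/3$ argument then shows that the sequence is Cauchy, with fixedness of the limit again following from continuity.
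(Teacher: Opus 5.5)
Your proof is correct and follows essentially the paper's argument. You bound the eigenvalues carried by the common eigenvectors by $\rho_\eta(\Sigma)\le 1$ via $\eta_{\|\cdot\|}(B)\ge \|Be\|/\|e\|$, then expand $A_\omega^k\otimes x$ as a finite max-combination of geometric sequences whose limit is a fixed point. The only difference is cosmetic: you bound the word eigenvalue $\alpha_\omega(e)$ directly along the subsequence of product lengths $kp$, whereas the paper bounds each single-letter eigenvalue through $\hat{\eta}_{\|\cdot\|}(A_i)\le\rho_\eta(\Sigma)$ (Theorem~\ref{thm:etacap-rho-eta-inequality}) and then multiplies.
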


\begin{proof}
For $A_i \in \Sigma$ and $v_j \in E$, let $A_i v_j = \alpha_{ij}v_j$, for some $\alpha_{ij} \in \mathbb{R}_+$. Then 
max--linearity gives
$A_i^k  v_j = \alpha_{ij}^k v_j$ for all $k \geq 1$. Hence $\eta_{\|\cdot \|}(A_i^k)\;\geq\;\frac{\|A_i^kv_j\|}{\|v_j\|} 
= \frac{\|\alpha_{ij}^k v_j\|}{\|v_j\|} = \alpha_{ij}^k$. Therefore, $\hat{\eta}_{\|\cdot \|}(A_i) \geq \alpha_{ij}$. It then 
follows from Theorem \ref{thm:etacap-rho-eta-inequality} that $\rho_\eta(\Sigma) \geq \hat{\eta}_{\|\cdot \|}(A_i)$ and so 
$\alpha_{ij} \leq \rho_\eta(\Sigma) \leq 1$. Now fix a word $\omega = \omega_1 \cdots \omega_p$ and set 
$\beta_j(\omega):= \displaystyle \prod_{t=1}^{p} \alpha_{\omega_t,j} \in [0,1]$.  Then
$A_\omega v_j = \beta_j(\omega)v_j$ and $A_\omega^k v_j = \beta_j(\omega)^k v_j$. 
Take $x \in \mathcal{LC}(E)$ and choose scalars $\gamma_1, \dots, \gamma_r \in \mathbb{R}_+$ such that
$x = \displaystyle \bigoplus_{j=1}^{r} \gamma_j v_j$.  Using max--linearity, we have 
\[
A_\omega^k x
= A_\omega^k (\bigoplus_{j=1}^{r} \gamma_j v_j)
= \displaystyle \bigoplus_{j=1}^{r} \gamma_j\,(A_\omega^k v_j)
= \displaystyle \bigoplus_{j=1}^{r} \gamma_j\,\beta_j(\omega)^k\,v_j.
\] 
Since each $\beta_j(\omega) \in [0,1]$, the scalar limit $\beta_j(\omega)^k \to 0$ if
$\beta_j(\omega) < 1$, while $\beta_j(\omega)^k = 1$ for all $k$ if $\beta_j(\omega) = 1$.
Thus $A_\omega^k x$ converges, and its limit $\xi_{x,\omega}$ satisfies
$A_\omega \xi_{x,\omega} = \xi_{x,\omega}$, thereby proving the periodicity conclusion as well.
\end{proof}

Note that in the above theorem, if the word $\omega$ contains the letter $i$ at least once, then 
$A_i \xi_{x,\omega} = \xi_{x,\omega}$. In particular, if $\omega$ contains every letter $1,\dots,N$ at least once, then
$\xi_{x,\omega}$ is a common fixed point for the entire collection $\Sigma$, and the limit
$\xi_{x,\omega}$ is independent of the choice of such a word $\omega$. This gives an immediate consequence to a matrix polynomial with coefficients from $\Sigma$. We state this as a corollary and skip its proof.

\begin{corollary}\label{cor:com-eigenvector-polynomial}
Let $\Sigma = \{A_1,\dots,A_N\}\subset M_n(\mathbb{R}_+)$ satisfy the hypotheses of Theorem \ref{thm:JSR-common-eig-periodic}. Also, let $P(\lambda) = B_0 \oplus \lambda B_1\oplus \cdots \oplus 
\lambda^{m-1}B_{m-1}$ be a max--matrix polynomial whose coefficients satisfy $B_j \in \Sigma$ for all
$j = 0,1, \dots, m-1$. Then, any fixed point $\xi_x$ for $A_\omega$, as described in Theorem 
\ref{thm:JSR-common-eig-periodic} is a fixed point of the map $x \mapsto P(1)\otimes x$. In other words,
$P(1)  \xi_x = \xi_x$.
\end{corollary}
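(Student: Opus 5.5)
We prove Corollary \ref{cor:com-eigenvector-polynomial} by reducing $P(1)\xi_x$ to a max-combination of the vectors $B_j\xi_x$, and showing each of these equals $\xi_x$. Fix $x \in \mathcal{LC}(E)$ with $x = \bigoplus_{j=1}^r \gamma_j v_j$, and fix a word $\omega$ containing every letter $1,\dots,N$ at least once.

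The first step is the explicit description of the limit from the proof of Theorem \ref{thm:JSR-common-eig-periodic}: $\xi_x = \xi_{x,\omega} = \bigoplus_{j \in J(\omega)} \gamma_j v_j$, where $J(\omega) = \{ j : \beta_j(\omega) = 1\}$. We already know $\alpha_{ij} \le \rho_\eta(\Sigma) \le 1$ for every $i,j$. Since $\beta_j(\omega) = \prod_t \alpha_{\omega_t,j}$ is a product of numbers in $[0,1]$, we get $\beta_j(\omega)=1$ if and only if $\alpha_{\omega_t,j} = 1$ for every letter $\omega_t$. Because every letter appears in $\omega$, this is equivalent to $\alpha_{ij} = 1$ for all $i=1,\dots,N$. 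So $J(\omega)$ is the same set $J := \{j : \alpha_{ij}=1 \ \forall i\}$ for every such word. This gives both the independence of $\xi_x$ from $\omega$ and the identity
\[ A_i \xi_x = \bigoplus_{j\in J} \gamma_j \alpha_{ij} v_j = \xi_x \quad \text{for all } i. \]
Thus $\xi_x$ is a common fixed point of $\Sigma$, which is the remark stated after the theorem.

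The second step applies max-linearity: $P(1)\xi_x = \bigoplus_{j=0}^{m-1} B_j \xi_x$. Each $B_j$ lies in $\Sigma$, so $B_j \xi_x = \xi_x$, and therefore $P(1)\xi_x = \bigoplus_{j=0}^{m-1} \xi_x = \xi_x$ by idempotency of $\oplus$.

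The only delicate point is the first step. If the word $\omega$ omits some letter, then $\xi_{x,\omega}$ need not be fixed by every $A_i$, so the corollary must be read with $\omega$ containing all generators (or at least all letters corresponding to the $B_j$). The key fact is the implication that a product of numbers in $[0,1]$ equals $1$ only if every factor equals $1$. This is where the bound $\alpha_{ij} \le 1$, which comes from the joint spectral radius hypothesis via Theorem \ref{thm:etacap-rho-eta-inequality}, is used.
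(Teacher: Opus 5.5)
Your proposal is correct and follows the route the paper intends. The paper omits the proof and relies on its remark after Theorem \ref{thm:JSR-common-eig-periodic} that $\xi_{x,\omega}$ is a common fixed point of $\Sigma$ when $\omega$ contains every letter, so that $P(1)\xi_x=\bigoplus_j B_j\xi_x=\xi_x$. Your first step proves that remark, using $\alpha_{ij}\le 1$ to force every factor of $\beta_j(\omega)=1$ to equal $1$, and your caveat about which letters $\omega$ must contain is exactly the hypothesis left implicit in the notation $\xi_x$.
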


We end the paper with the following corollary.

\begin{corollary}\label{cor:JSR-strict-to-zero}
Let $\Sigma = \{A_1,\dots, A_N\} \subset M_n(\mathbb{R}_+)$ be finite and suppose that 
$\rho_\eta(\Sigma) < 1$.  Then for every word $\omega$ and every $x \in \mathbb{R}^n_+$,
$\displaystyle \lim_{k\to\infty} A_\omega^k x = 0$. In particular, the only periodic point of the map 
$x \mapsto A_\omega x$ is $0$.
\end{corollary}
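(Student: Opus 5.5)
The plan is to show that the word product $A_\omega$ has geometric decay in $\eta$, and then pass from this decay to the orbit $A_\omega^k x$. Fix a word $\omega = \omega_1\cdots\omega_p$, so $A_\omega = A_{\omega_p}\cdots A_{\omega_1}$ is a product of $p$ elements of $\Sigma$. Then $A_\omega^k$ is a product of $kp$ elements of $\Sigma$, and therefore
\[
\eta_{\|\cdot\|}(A_\omega^k) \leq \sup_{B_1,\dots,B_{kp}\in\Sigma} \eta_{\|\cdot\|}(B_1\cdots B_{kp}).
\]
Choose $r$ with $\rho_\eta(\Sigma) < r < 1$. By the definition of $\rho_\eta(\Sigma)$ as a $\limsup$, there is an $L_0$ such that for all $L \geq L_0$ the supremum over products of length $L$ is at most $r^L$. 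Applying this with $L = kp$ gives $\eta_{\|\cdot\|}(A_\omega^k) \leq r^{kp}$ for all large $k$.

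Next we convert the seminorm bound into a vector bound. Since $\eta_{\|\cdot\|}$ is the supremum of $\|Ax\|/\|x\|$ over nonzero $x\in\mathbb{R}^n_+$, we have
\[
\|A_\omega^k x\| \leq \eta_{\|\cdot\|}(A_\omega^k)\,\|x\| \leq r^{kp}\|x\| \longrightarrow 0
\]
for every $x \in \mathbb{R}^n_+$. Because $\|\cdot\|$ is a norm, this gives $A_\omega^k x \to 0$. If one prefers not to use the $\limsup$ directly, an alternative is Theorem \ref{Determining the JSR}: it gives $\rho(\Sigma)=\mu(\Sigma)$, and together with the equivalence of norms this yields $\rho_\eta = \rho$, so the same estimate holds with the induced matrix norm.

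The periodic-point statement then follows quickly. If $A_\omega^q x = x$, then $x = A_\omega^{kq}x$ for every $k$. The right-hand side tends to $0$ by the previous step, so $x = 0$.

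The main obstacle is the first step. A $\limsup$ bound below $1$ controls products only for sufficiently large lengths, not for every length. This is handled by restricting to lengths $kp$ with $k$ large, so only finitely many initial terms of the orbit are excluded, and these do not affect the limit. Finiteness of $\Sigma$ guarantees boundedness, so $\rho_\eta(\Sigma)$ is finite and well defined. No eigenvector hypothesis is needed here, unlike Theorem \ref{thm:JSR-common-eig-periodic}.
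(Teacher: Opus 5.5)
Your proposal is correct and follows essentially the same route as the paper. You pick $r\in(\rho_\eta(\Sigma),1)$, use the $\limsup$ definition to get $\eta_{\|\cdot\|}(A_\omega^k)\le r^{kp}$ for large $k$, bound $\|A_\omega^k x\|\le r^{kp}\|x\|$, and exclude nonzero periodic points via $A_\omega^{kq}y=y$. The aside through Theorem \ref{Determining the JSR} is unnecessary but harmless.
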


\begin{proof}
Fix a word $\omega$ of length $p$ and choose $r$ such that $\rho_\eta(\Sigma) < r <1$. 
By the definition of $\rho_\eta(\Sigma)$, there exists $K \in \mathbb{N}$ such that for all $t \geq K, 
\displaystyle \sup_{B_1,\dots, B_t \in \Sigma}\eta_{\|\cdot \|}(B_t \cdots B_1) \leq r^t$. For $k$ large enough so that $kp \geq K$, 
the matrix $A_\omega^k$ is a product of length $kp$
with factors in $\Sigma$ and hence $\eta_{\|\cdot \|}(A_\omega^k) \leq r^{kp}$. Therefore 
$\|A_\omega^k\otimes x\| \leq \eta_{\|\cdot \|}(A_\omega^k) \|x\| \leq r^{kp} \|x\| \rightarrow 0$, which implies 
$A_\omega^k x\to 0$ in $\mathbb{R}^n_+$.  If $y \neq 0$ were a periodic
point, say $A_\omega^q y = y$, then $A_\omega^{kq} y = y$ for all $k$, contradicting
the convergence to $0$.  Hence $0$ is the only periodic point.
\end{proof}

We end the paper with an example that illustrates the above Theorem and Corollary.

\begin{example}\label{eg-2}
We construct a family $\Sigma = \{A_1, A_2\} \subset M_4(\mathbb{R}_+)$ illustrating Theorem \ref{thm:JSR-common-eig-periodic}. Let 
$A_1 =
	\begin{pmatrix}
		1   & 0.5 & 0   & 0   \\
		0.3 & 1   & 0   & 0   \\
		0   & 0   & 0.9 & 0.7 \\
		0   & 0   & 0.5 & 0.9
	\end{pmatrix},
\ 
	A_2 =
	\begin{pmatrix}
		0.8 & 1   & 0   & 0   \\
		1   & 0.4 & 0   & 0   \\
		0   & 0   & 0.8 & 0.6 \\
		0   & 0   & 0.4 & 0.8
	\end{pmatrix}, \ v_1 = (1,1,0,0)^T$ and $v_2 = (0,0,1,1)^T$. A direct computation gives
	\[
	A_1 \otimes v_1 = v_1, \quad A_2 \otimes v_1 = v_1,
	\]
and therefore $v_1$ is a common max-eigenvector of $\Sigma$ with corresponding eigenvalue $1$. Similarly,
	\[
	A_1 \otimes v_2 = 0.9\, v_2, \quad A_2 \otimes v_2 = 0.8\, v_2,
	\]
making $v_2$ a common max-eigenvector of $A_1$ with eigenvalue $0.9$, and of $A_2$ with
eigenvalue $0.8$. Thus $E = \{v_1, v_2\}$ is a nontrivial common max-eigenvector set for $\Sigma$. By 
statements $(1)$ and $(2)$ of Theorem \ref{Determining the JSR}, $\rho_\eta(\Sigma) = \mu(A_1 \oplus A_2)$. From the above, we have,
	\[A_1 \oplus A_2  = \begin{pmatrix}
		1 & 1   & 0   & 0   \\
		1   & 1& 0   & 0   \\
		0   & 0   & 0.9 & 0.7 \\
		0   & 0   & 0.5 & 0.9
	\end{pmatrix}.\]
The spectral radius of $A_1 \oplus A_2$ is $\mu(A_1 \oplus A_2) =1$. Thus, $\rho_\eta(\Sigma) = 1$.  Let 
$x = \alpha\, v_1\oplus \beta\, v_2 = (\alpha,\alpha,\beta,\beta)^T \in \mathcal{LC}(E)$, for 
$\alpha, \beta \in \mathbb{R}_+$. Consider the word $\omega = 12$, so that $A_\omega = A_2\otimes A_1$. Then 
$A_\omega \otimes v_1 = v_1$ and $A_\omega \otimes v_2 = (0.9 \times 0.8)\,v_2 = 0.72\,v_2$. By max-linearity,
	\[
	A_\omega^k \otimes x
	\;=\;
	\alpha\, v_1 \oplus \beta\,(0.72)^k\, v_2
	\;\xrightarrow{k\to\infty}\;
	\alpha\, v_1
	\;=\;
	(\alpha,\,\alpha,\,0,\,0)^T
	\;:=\;
	\xi_{x,\omega}.
	\]
Since $A_\omega \otimes \xi_{x,\omega} = \xi_{x,\omega}$, the limit $\xi_{x,\omega}$ is a fixed point of $A_\omega$, confirming Theorem \ref{thm:JSR-common-eig-periodic}. The component along $v_2$ decays geometrically because both eigenvalues ($0.9$ and $0.8$) are strictly less than $1$, while the component along $v_1$ stabilizes because the 
corresponding eigenvalue equals $1$. Moreover, since the word $\omega=12$ contains every letter in $\{1,2\}$, the limit $\xi_{x,\omega}$ is a common fixed point of $\Sigma$ and is independent of the particular choice of such a word.
\end{example}

\section{Concluding Remarks}

We summarize the main results obtained in this paper.

\begin{itemize}
\item Bounded sets of matrix polynomials over max-algebras, their growth in terms of two max-induced seminorms 
and the associated joint spectral radius of the coefficient pool are brought out.
\item The coefficient pool being simultaneously triangularizable is of particular interest and interesting results in this 
setting are brought out.
\item Dynamics of max-linear maps $x \mapsto Ax$ and convergence to periodic points are studied; in particular, 
a global bound in terms of the joint spectral radius of the entire coefficient pool of the bounded set of matrix polynomials, 
the existence of common eigenvectors and convergence to periodic points for all word products simultaneously are brought out.
\end{itemize}

\bibliographystyle{amsplain}

\end{document}